\newtheorem{theorem}{Theorem}
\newtheorem{definition}{Definition}
\title{\LARGE \bf
Robust Optimization on Unrelated Parallel Machine Scheduling with Setup Times
}
\author{
  David S.~Hippocampus\thanks{Use footnote for providing further
    information about author (webpage, alternative
    address)---\emph{not} for acknowledging funding agencies.} \\
  Department of Computer Science\\
  Cranberry-Lemon University\\
  Pittsburgh, PA 15213 \\
  \texttt{hippo@cs.cranberry-lemon.edu} \\
   \And
 Elias D.~Striatum \\
  Department of Electrical Engineering\\
  Mount-Sheikh University\\
  Santa Narimana, Levand \\
  \texttt{stariate@ee.mount-sheikh.edu} \\
}
\author{

Chutong Gao \\
Department of Mechanics and Engineering Science\\
Peking University, Beijing 100871, China\\ \texttt{oliverpku@pku.edu.cn}\\

\And

Weihao Wang\\
Department of Industrial Engineering and Management\\
Peking University, Beijing 100871, China\\
\texttt{wangweihao@pku.edu.cn}\\

\And
Leyuan Shi \\
Department of Industrial and Systems Engineering\\
University of Wisconsin-Madison, Madison, WI 53706 USA\\
\texttt{leyuan@engr.wisc.edu}\\
}
\begin{document}

\maketitle
\thispagestyle{empty}
\pagestyle{empty}

\begin{abstract}
The parallel machine scheduling problem has been a popular topic for many years due to its theoretical and practical importance. This paper addresses the robust makespan optimization problem on unrelated parallel machine scheduling with sequence-dependent setup times, where the processing times are uncertain, and the only knowledge is the intervals they take values from. We propose a robust optimization model with min-max regret criterion to formulate this problem. To solve this problem, we prove that the worst-case scenario with the maximum regret for a given solution belongs to a finite set of extreme scenarios. Based on this theoretical analysis, the procedure to obtain the maximum regret is proposed and an enhanced regret evaluation method (ERE) is designed to accelerate this process. A multi-start decomposition-based heuristic algorithm (MDH)  is proposed to solve this problem. High-quality initial solutions and an upper bound are examined to help better solve the problem. Computational experiments are conducted to justify the performance of these methods.

\end{abstract}

\section{INTRODUCTION} \label{section1}
With the popularity of mass customization, manufacturing companies are faced with more complex and dynamic customer demands and production environments. The manufacturing industry, like equipment manufacturing, semiconductor fabrication, and medical instrument production, etc., requires effective and efficient production scheduling to cope with the various uncertainties and fulfill the intensive customer needs. 

The problem we discuss in this article is derived from a factory producing high-end equipment in the east of China. The scheduler in this factory needs to schedule a series of R$\&$D jobs to some different labor groups to minimize the maximum completion time of all the groups, i.e., makespan. Each job needs to be finished by exactly one group. Those groups process these jobs in parallel, and each group can only handle no more than one job at a time. All jobs are released from the start, and have no precedence constraints among them. Though any job could be assigned to any machine, the skills and the kind of jobs the groups are adept at vary from each other, so the processing time of one job is non-identical in different groups and cannot be fully correlated by simple rate adjustments. In other words, the processing times at different groups are unrelated. Considering all the factors above, these labor groups could be viewed as unrelated parallel machines. When a machine finishes one job, cleaning, adjustment, and reconfiguration are required to switch to another job. Thus machine-dependent and job sequence-dependent setup times need to be considered in this problem. Our aim is to find the best schedule, i.e., the job assignment and the job sequence on each machine, to optimize the makespan. 

Assuming we know exactly the processing time of each job, this problem could be viewed as the \underline{U}nrelated \underline{P}arallel \underline{M}achine \underline{S}cheduling Problem with Sequence-dependent \underline{S}etup Times (UPMSS) and the objective of makespan minimization, which is a complex combinatorial optimization problem and has been extensively studied. However, this assumption does not quite reflect the actual situations in practice. 
Due to the uncertainty such as fails in product R$\&$D, variations in machine conditions and production environment, the processing time of each job on each machine is uncertain at the time when a scheduler dispatches the jobs. 

Stochastic programming that models the uncertainty as random variables with proper probability distribution is a popular method to deal with the uncertainty in optimization problem (see Leung et al. \cite{leung2006stochastic}). However, due to the lack of historical data and the multiple interdependent factors that lead to the uncertainty, the exact distributions of uncertain parameters are difficult to obtain, and stochastic programming approach with a false distribution assumption may lead to very bad results in reality.

In this factory, the only knowledge is the estimated lower and upper bounds of the processing times based on the historical experience of the scheduler. Under such circumstances, robust approaches are proposed to hedge against the variations of the parameters. 
Two natural ways of defining the possible realisations of uncertain parameters (referred to scenarios) are often used in robust optimization: discrete scenarios and interval scenarios. Discrete scenarios explicitly give the values of the parameters under each specific scenario. In interval scenarios, each parameter can take an arbitrary value from a specified interval. Consider the time uncertainty in the factory we investigate, we adopt the continuous interval scenario in this paper. That is to say, the processing time of each job on each machine is taken from a prespecified interval with a given lower and upper bound.

In robust optimization, two criteria, min-max and min-max regret, are often used to evaluate the robust solutions. For the problem we investigate, the min–max criterion tries to optimize the makespan under the sole worst-case scenario which only consists of the upper bounds of the processing time intervals. This particular worst-case scenario is paid too much more importance. But in reality, it may be unlikely to occur. Therefore, using this criterion in our problem tends to be over-conservative. The min–max regret criterion, however, tries to find a robust solution that has the smallest maximum deviation of the makespan of a given solution from the optimum across all possible scenarios. 
With min-max regret, the decision maker aims to minimize the opportunity loss, i.e., the cost of a solution is compared ex-post to the cost of the best solution which could have been chosen (see Kasperski and Zieli\'nski \cite{kasperski2016robust}), which leads to less conservative decision compared to 
the min-max criteria. However, its difficulty lies in the calculation of the optimal performance under each scenario, which by itself is already very complex, when evaluating each feasible solution. 

To avoid the scheduler's decision from being extremely conservative and make use of the information about both lower bounds and upper bounds of processing times, we choose the min-max regret as the criterion.
Thus this problem be summarized as the \underline{R}obust Optimization on \underline{U}nrelated \underline{P}arallel \underline{M}achine \underline{S}cheduling with Sequence-dependent \underline{S}etup Times and Min-max Makespan Regret Criterion (RUPMSS).

The main contributions of this study are concluded as follows.
\begin{enumerate}
    \item As far as we know, we are the first to consider the uncertainty of the processing times and introduce the robustness with min-max regret criterion to the unrelated machine scheduling with setup times. We propose a robust (min-max regret) makespan minimization problem on unrelated parallel machines with setup times, and the uncertain processing times lie in intervals and no other information on the distribution is known.
    \item The critical properties about worst case scenarios of this problem are proposed to reduce the infinite scenario set to some finite extreme scenarios. Based on it, we give a procedure to obtain the worst-case scenario of a given solution and an \underline{E}nhanced \underline{R}egret 
    \underline{E}valuation method (ERE) to accelerate this procedure.
    \item A \underline{M}ulti-start \underline{D}ecomposition-based \underline{H}euristic Algorithm (MDH) is proposed based on the analysis on the decomposition property and the characteristics of the shift \& interchange search methods. The performance of the algorithm outperforms the existing exact algorithms and heuristics for large cases.
\end{enumerate}

The remainder of this article is organized as follows. 
Section \ref{section2} reviews the literature on the related problems.
In Section \ref{section3}, we formulate RUMPSS as a mixed-integer programming model. Some properties for the worst-case scenarios are proposed and a procedure of finding the worst-case scenario for a given solution is introduced in Section \ref{section4}. In Section \ref{section5}, the ERE method is designed to calculate the maximum regret of a solution. The MDH algorithm with two local search methods is proposed in Section \ref{section6}. Section \ref{section7} is the numerical experiments and Section \ref{section8} is the conclusion and some further directions.



\section{Literature Review} \label{section2}

Robust optimization has been a popular method to hedge against uncertainty in many combination optimization problems for decades, such as the assignment problem \cite{pereira2011exact, wu2018exact}, product packing problem \cite{liu2019product}, vehicle routing problem \cite{lee2012robust,lu2019robust}, etc. More studies on robust optimization problem could refer to the survey papers by Kasperski and Zieli\'nski \cite{kasperski2016robust}, and by Aissi et al. \cite{ aissi2009min}.

The robust optimization has also been used to solve many scheduling problems to achieve certain objectives under various uncertainties, among which many studies have paid attention to the time uncertainty with interval parameters.
For instance, some studies pay attention to the robust project scheduling problem. Artigues et al. \cite{artigues2013robust} study a resource-constrained robust project scheduling problem with uncertain activity duration to minimize the maximum regret of makespan. They propose both an iterative exact algorithm called scenario relaxation method and a heuristic algorithm to solve this problem. Bruni et al. \cite{bruni2017adjustable} consider an adjustable resource-constrained robust project scheduling problem with uncertain activity duration. The task sequencing needs to be decided without violating the task precedence relationship and resource constraints to minimize the worst-case makespan. An exact solution method similar to the Bender's decomposition method is proposed to solve this problem. 

As for the robust production scheduling problem, Kasperski \cite{kasperski2005minimizing} discusses a robust single machine scheduling problem with the objective of lateness. The processing times are specified as intervals and a polynomial algorithm is proposed to solve this problem. Pereira \cite{pereira2016robust} further studies another robust single machine scheduling problem with interval processing time and the objective of total weighted completion time. A branch-and-bound algorithm to solve this problem. Drwal \cite{drwal2018robust} studies a robust single machine scheduling problem with the objective to minimize the weighted number of late jobs. The author assumes the due dates of jobs are uncertain and belong to intervals. They develop a polynomial-time algorithm to solve the special case where all weights are equal to 1. And a MILP formulation is presented for the general case. The solution quality of two simple heuristic algorithms are assessed and a decomposition method is proposed to handle the large-scale problems. Drwal and J\'ozefczyk \cite{drwal2020robust} further study the above robust single machine scheduling problem with interval job processing time and the objective of the weighted number of late jobs, and propose a specialized branch-and-bound algorithm to solve it. 
As for the flow shop, Kouvelis et al. \cite{kouvelis2000robust} show that a two-machine flow shop robust scheduling problem with uncertain processing time and min-max regret criterion is NP-hard. Gholami-Zanjani et al. \cite{gholami2017robust} study a robust flow shop scheduling with interval processing times and sequence-dependent setup times with the objective of min-max weighted flow time. They analyze the robust counterpart of the model and apply it to a real-case in PCB assembly company. Liao and Fu \cite{liao2020min} consider a general permutation flow shop scheduling problem with interval production time. A robust model is formulated with the bi-objectives of min-max regret of total completion time and min-max production tardiness. A directed graph tool is applied to identify the worst-case scenario and a genetic algorithm is implemented to solve this problem.   

Because parallel machines are widely present in various manufacturing systems, some work has also paid attention to the robust parallel machine scheduling problem with min-max regret criterion and interval parameters. 
Several studies focus on the objective of total flow time. Drwal and Rischke \cite{drwal2016complexity}, Xu et al. \cite{xu2014hedge} minimize the maximum regret of total flow time on identical and uniform parallel machine with interval job processing time, respectively.  
Siepak and J\'ozefczyk \cite{siepak2014solution}, Conde \cite{conde2014mip} consider the robust scheduling problem above under unrelated parallel machine settings.

Xu et al. \cite{xu2013robust} consider a robust identical parallel machine scheduling problem with interval job processing times to minimize the maximum regret of the makespan. 
They propose an iterative exact algorithm to solve the small-scale problem. A hill-climbing and a simulated annealing algorithm are further proposed to solve larger scale problems. Feng et al. \cite{feng2016robust} consider the robust scheduling problem of a two-stage hybrid flow shop setting where the first stage is a single machine and the second stage consists of identical parallel machines. They also select the min-max regret of makespan as the objective function and use the same method as Xu et al. \cite{xu2013robust} to solve this problem. However, the aforementioned literature on robust parallel machine scheduling rarely takes the setup time, especially the sequence-dependent setup time, into account.

As for the problem we investigate, a handful of studies have paid attention to the deterministic version of the problem, namely the UPMSS. The parallel machine scheduling problem is NP-hard even for a case of two identical parallel machines \cite{garey1979computers}. 
A series of models such as those in Guinet \cite{guinet1993scheduling}, Avalos-Rosales et al. \cite{avalos2015efficient}, and Fanjul-Peyro et al. \cite{fanjul2019reformulations} are proposed to improve the scale of the instances that could be solved to optimality. 
And some exact algorithms to solve UPMSS, for instance the Decomposition-based Method by Tran et al. \cite{tran2016decomposition} and Mathematical-programming Base Algorithm by Fanjul-Peyro et al. \cite{fanjul2019reformulations}, could be found in literature.  
Nevertheless, due to the NP-hard nature of this problem, various heuristic and meta-heuristic algorithms could be found in the literature. 
For instance, Lin and Ying \cite{lin2014abc} propose a Hybrid Artificial Bee Colony algorithm (HABC) that outperforms the best algorithms at that time such as ACO, TS, RSA, and Meta-RaPS. Wang et al. \cite{wang2016hybrid} propose a Hybrid Estimation of Distribution Algorithm (EDA) with Iterated Greedy (IG) search method and compare it with two GA algorithms. Arnaout \cite{arnaout2020worm} further proposes a Worm Optimization algorithm (WO) for this problem.  
More related studies could refer to the survey paper by Allahverdi \cite{allahverdi2015third}.

To the best of our knowledge, the robust unrelated parallel machine scheduling with sequence-dependent setup times has not been studied by other researchers. So this study intends to fill this gap and solve the problem that could be encountered in many production areas.


\section{Problem Formulation} \label{section3}

We formulate our RUPMSS problem based on Avalos-Rosales et al. \cite{avalos2015efficient} 's model for the deterministic UPMSS. There is a set of jobs $N=\{1,\cdots,n\}$ to be processed on machines $M=\{1,\cdots,m\}$. $p_{ij}^s$, the processing time of job $j$ on machine $i$ under scenario $s\in S$ (the set of all the scenarios) ranges from $[\underline{p}_{ij}, \overline{p}_{ij}]$. After processing job $j$ on machine $i$, a fixed sequence-dependent setup time $s_{ijk}$ is needed if the next job on machine $i$ is job $k$. Our aim is to minimize the maximum regret of a solution $\pi\in \Phi$, i.e., $R_{\max}(\pi) =  \underset{s\in S}{\max} (F(\pi,s) - F^s_*)$, where $F(\pi,s)$ represents the makespan of solution $\pi$ under scenario $s$, $F^s_*$ is the optimal makespan under scenario $s$, and $\Phi$ is the set of all the feasible schedules. 
The notations we use are summarized as follows:

{\bf Notations}
\begin{description}
  \item[$N$] set of jobs, $N=\{1,\dots,n\}$;
  \item[$N_0$] set of jobs with a dummy job 0 added, $N_0 = \{0\}\cup N$
  \item[$M$] set of machines, $M=\{1,\dots,m\}$;
  \item[$j,k$] index of jobs, $j, k\in N_0$;
  \item[$i$] index of machines, $i \in M$;
  \item[$S$] set of scenarios, $S = \left\{[\underline{p}_{ij},\overline{p}_{ij}] | i\in M, j\in N_0 \right\}$;
  \item[$p_{ij}^s$] the processing time of job $j$ on machine $i$ under scenario $s$, $p_{ij}^s\in [\underline{p}_{ij}, \overline{p}_{ij}]$. Note that $\underline{p}_{i0} = \overline{p}_{i0} = 0$ for any machine $i$. In other words, $p_{i0}^s = 0$ for any scenario $s\in S$;
  \item[$s_{ijk}$] the setup time of job $k$ after job $j$ on machine $i$. $s_{i0k}$ is the setup time of job $k$ if it is the first to be processed on machine $i$, and $s_{ij0} = 0$, which means job $j$ is the last to be processed on machine $i$. Let $s_{ijj}=0$ for $\forall j\in N_0$; 
  \item[$V$] a large number.
\end{description}


{\bf Variables}
\vspace{-0.1cm}
\begin{description} 
  \item[$x_{ijk}$]  $x_{ijk}=1$ if job $k$ is the successor of job $j$ on machine $i$; otherwise $x_{ijk}=0$. Note that $x_{i0k}(x_{ik0}) = 1$ means that job $k$ is the first(last) to be processed on machine $i$. $X =\{x_{ijk}|i\in M, j, k\in N_0\}$;
  \item[$y_{ij}$] $y_{ij} = 1$ if job $j$ is assigned to machine $i$. Otherwise, $y_{ij} = 0$. $Y = \{y_{ij}|i\in M, j\in N_0\}$;
  \item[$C_{j}^s$] the completion time of job $j$ under scenario $s$;
  \item[$C_{\max}^s$] the makespan under scenario $s$.
\end{description}

We note that each feasible schedule $\pi\in \Phi$ could be described by $\{X,Y\}$, where $Y$ specifies the assignment of the jobs and $X$ further describes the sequence of the jobs on each machine. It's easy to see that $F(\pi,s) = C_{\max}^s=\underset{j\in N}{\max} ~C_j^s$, and $F^s_* = \underset{\pi\in \Phi}{\min} F(\pi,s)$.

Then, the model could be formulated as follows:
\begin{alignat}{1}
\label{C1} &\underset{\pi\in \Phi}{\min}~\underset{s\in S}{\max}\left( F(\pi,s) - F^s_* \right) \\
\label{C2} \textrm{s.t.}&\sum_{i \in M} y_{i j}=1, \quad j \in N\\
\label{C3} &y_{i j}=\sum_{k \in N_{0}, j \neq k} x_{i j k},\quad i \in M, j \in N\\
\label{C4} &y_{i k}=\sum_{j \in N_{0}, j \neq k} x_{i j k},\quad i \in M, k \in N\\
\label{C5} & \sum_{k\in N} x_{i 0 k} \leq 1,\quad i \in M\\
\label{C6} &C_{k}^s-C_{j}^s+V\left(1-x_{i j k}\right) \geq s_{i j k}+p_{i k}^s,\quad j\in N_{0}, j \neq k, k \in N, i \in M, s\in S  \\
\label{C7} &C_{\max}^s \geq C_{j}^s, \quad j\in N, s\in S\\
\label{C8} &\sum_{j,k \in N_{0} } s_{ijk}x_{i j k} + \sum_{j \in N}p_{i j}^s y_{ij} \leq C_{\max}^s,\quad i \in M, s\in S\\
\label{C9} & C_0^s = 0,\quad s\in S\\
\label{C10} & x_{ijk}\in\{0,1\}, y_{ij}\geq 0, C_j^s\geq 0, C^s_{\max}\geq 0,\quad j, k \in N, i \in M, s\in S
\end{alignat}

Constraints (\ref{C2}) ensure that each job should be assigned to one machine exactly. 
Constraints (\ref{C3}) guarantee that each job has one and only one follow-up job on the machine it is assigned. The last job at the end of each machine would also followed by a dummy job. 
Constraints (\ref{C4}) ensure that each job on each machine is followed by one and only one preceding job. For the first job on the machine, its preceding job is the dummy job. 
Constraints (\ref{C5}) state that at most one jobs is scheduled as the first job after the dummy job on each machine. 
The following constraints are all related to the scenarios the uncertain parameters may occur. 
Constraints (\ref{C6}) are called subtour elimination constraints (SEC) in TSP literature. They stipulate the relationship of the completion time of the jobs assigned to each machine to avoid loops and subtours. 
Specifically, under each scenario $s$, if job $k$ is scheduled after job $j$ on machine $i$, then the job $k$ should be completed in at least $s_{i j k}+p_{i k}^s$ time units after the completion of job $j$.
Constraints (\ref{C7}) stipulate that the makespan should be no less than the completion time of each job in each scenario, which are feasible cuts proved efficient by \cite{avalos2015efficient}.
Constraints (\ref{C8}) are the definition of the makespan. In constraints (\ref{C8}), the left side of the inequality calculate the completion time of each machine as the sum of the sequence-dependent setup time and the processing time of all jobs assigned on it. Then these constraints states that in each scenario the overall makespan should be no less than the completion time of each machine.
The completion times of dummy jobs under all scenarios are set to 0 as shown in constraints (\ref{C9}).
Constraints (\ref{C10}) shows the type and range of each variable. According to the problem structure and constraints (\ref{C3}) and (\ref{C4}), variable $y_{ij}$ is the sum of binary variables $x_{ijk}$ which is bounded by 1 in constraints (\ref{C2}). Therefore it could be relaxed to positive rather than binary variable in this model.

\section{Theoretical Analysis on Worst-case Scenarios} \label{section4}

We could reformulate the nonlinear programming model above as below:
\begin{alignat}{1}
&\label{C11} \underset{\pi}{\min}~ r\\
&\label{C12} F(\pi,s) - F_*^s \leq r,\quad s\in S\\
& \text{Constraints (\ref{C2})-(\ref{C10})} \notag
\end{alignat}

The model is extremely difficult to solve. The underlying difficulties lie in three aspects: (1) The number of the scenarios is infinite, i.e., $|S|=+\infty$, and thus the number of the constraints is infinite; (2) the number of the decision variables is also infinite because of the existence of $C_j^s$, which are necessary for SEC constraints; (3) the calculation of $F_*^s$ requires solving NP-hard deterministic UPMSS under every scenario. Thus, we must examine the structure of the model to capture the characteristics of the worst-case scenarios. In the following analysis, we could limit the scenarios to some finite extreme cases. First, we give the definitions of the critical machine and extreme scenario for a schedule.

\begin{definition}
We denote a machine $i$'s completion time in schedule $\pi$ under scenario $s$ as $F_i(\pi,s)$. A machine $f$ is critical in schedule $\pi$ under scenario $s$ if its completion time is maximum among all machines, i.e.,$ F_f(\pi,s) = \underset{i\in M}{\max}~F_i(\pi,s)$, or equivalently, $F_f(\pi,s)= F(\pi,s)$.
\end{definition}

\begin{definition}
Extreme scenario $s^f, f\in M$ for schedule $\pi$ is defined as:
\begin{equation}
    p_{i j}^{s^{f}}=\left\{\begin{array}{ll}
\overline{p}_{i j}, \text { if } y_{f j}=1 \\
\underline{p}_{i j}, \text { if } y_{f j}=0, i\in M, j\in N_0
\end{array}\right.
\end{equation}\label{equ:extreme}
\end{definition}

With these definitions, we propose Theorem \ref{theorem:extreme} to obtain the worst-case scenarios for a schedule.

\begin{theorem}\label{theorem:extreme}
For a schedule $\pi$, let $s^0$ be a worst-case scenario for $\pi$, in which machine $f$ is a critical machine. Then the extreme scenario $s^f$ for schedule $\pi$ satisfies:

(a) Machines $f$ is also a critical machine in schedule $\pi$ under scenario $s^f$;

(b) Scenario $s^f$ is also a worst-case scenario for $\pi$.
\end{theorem}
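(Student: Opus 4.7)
Part (a) reduces to a machine-by-machine comparison of loads under $s^f$ versus $s^0$. On machine $f$, every job $j$ assigned to $f$ in $\pi$ has $y_{fj}=1$, so by the definition of $s^f$ its processing time becomes $\overline{p}_{fj}\ge p_{fj}^{s^0}$, hence $F_f(\pi,s^f)\ge F_f(\pi,s^0)$. On any machine $i\ne f$, every job $j$ on $i$ satisfies $y_{fj}=0$, so $p_{ij}^{s^f}=\underline{p}_{ij}\le p_{ij}^{s^0}$, giving $F_i(\pi,s^f)\le F_i(\pi,s^0)$. Chaining these with the hypothesis that $f$ is critical under $s^0$, namely $F_f(\pi,s^0)\ge F_i(\pi,s^0)$ for every $i$, yields $F_f(\pi,s^f)\ge F_i(\pi,s^f)$ for every $i$, which is exactly (a).

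For part (b), the inequality $F(\pi,s^0)-F_*^{s^0}\ge F(\pi,s^f)-F_*^{s^f}$ is automatic because $s^0$ is a worst case and $s^f\in S$, so the substantive direction is the reverse inequality. Part (a) already supplies $F(\pi,s^f)=F_f(\pi,s^f)\ge F_f(\pi,s^0)=F(\pi,s^0)$, so it suffices to establish $F_*^{s^f}\le F_*^{s^0}$. My plan is to leverage the worst-case property of $s^0$ through coordinate-wise perturbation arguments. For any entry $p_{ij}^{s^0}$ that does not appear in the expression for $F(\pi,s^0)$ -- i.e., $j$ is not scheduled on $i$ in $\pi$, or $i$ is non-critical -- decreasing $p_{ij}^{s^0}$ toward $\underline{p}_{ij}$ leaves $F(\pi,s^0)$ unchanged while weakly lowering $F_*^{s^0}$, hence only weakly raises the regret. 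Since $s^0$ already attains the maximum regret, the regret must remain constant, and we may assume without loss of generality that $p_{ij}^{s^0}=\underline{p}_{ij}$ at all such coordinates; an analogous monotone argument pushes $p_{fj}^{s^0}$ up to $\overline{p}_{fj}$ for jobs assigned to $f$ in $\pi$. After these reductions, $s^0$ coincides with $s^f$ on every coordinate that influences $F(\pi,\cdot)$, and a direct comparison then bounds $F_*^{s^f}$ by $F_*^{s^0}$.

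The hardest step will be handling the coordinates on which $s^f$ and $s^0$ may still disagree after the reduction -- namely entries $p_{ij}$ with $i\ne f$ and job $j$ assigned to $f$ in $\pi$, where $s^f$ prescribes $\overline{p}_{ij}$. Raising such entries leaves $F(\pi,s)$ invariant but can weakly raise $F_*^s$, so the plan is to argue via the worst-case invariance of $s^0$ that the regret is in fact preserved along the entire path from $s^0$ to $s^f$. A careful monotonicity analysis of $F_*^s$ as a function of the individual processing times, together with the observation that any strict drop in regret would contradict $s^0$'s optimality, is the key technical ingredient and the main obstacle I anticipate.
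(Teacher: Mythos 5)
Your part (a) is correct and is exactly the load-monotonicity argument the paper relies on. The trouble is in part (b), and it starts with the sentence ``it suffices to establish $F_*^{s^f}\le F_*^{s^0}$'': that inequality is false in general (raising $p_{fj}$ to $\overline p_{fj}$ for the jobs on $f$ can strictly increase the optimal makespan, e.g.\ when every optimal schedule also places those jobs on $f$), so that target is a dead end even though it would be sufficient. Your fallback plan --- walk coordinate-by-coordinate from $s^0$ to $s^f$ and argue the regret never drops --- is the right idea for the coordinates you treat in the middle of the paragraph, but the ``hardest step'' you isolate rests on a misreading of the extreme scenario. In the paper's proof, $p_{ij}^{s^f}$ is set to $\overline p_{ij}$ only for $i=f$ and $j\in M_f^\pi$, and to $\underline p_{ij}$ everywhere else; in particular the entries $p_{ij}$ with $i\ne f$ and $j$ assigned to $f$ are pushed \emph{down}, not up, and are absorbed by your first (easy) reduction. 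Under the reading you adopt --- those entries pushed up to $\overline p_{ij}$ --- the theorem is simply false: take two machines, two jobs both on machine $1$ in $\pi$, zero setups, all intervals $[1,10]$. The worst case is $p_{1j}=10$, $p_{2j}=1$ with regret $20-2=18$, while setting every entry to $10$ gives regret $20-10=10$.

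Moreover, the argument you propose for that step cannot be repaired: ``any strict drop in regret would contradict $s^0$'s optimality'' is a non sequitur, because $s^0$ is a \emph{maximizer} of the regret --- moving away from it and watching the regret fall contradicts nothing; it only shows the new scenario is not worst-case, which is exactly what happens in the counterexample above. Once the definition is read correctly, your perturbation route does close: lowering every entry with $y_{ij}=0$ or with $i$ noncritical leaves $F(\pi,\cdot)$ fixed and weakly lowers $F_*$, and raising $p_{fj}$ by $\delta$ for $j\in M_f^\pi$ raises $F(\pi,\cdot)$ by exactly $\delta$ (machine $f$ stays critical by part (a)) while raising $F_*$ by at most $\delta$; both moves weakly increase the regret, which is already maximal at $s^0$. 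That would be a legitimate alternative to the paper's proof, which instead establishes the single uniform inequality $F(\tilde\pi,s^f)-F(\tilde\pi,s^0)\le F(\pi,s^f)-F(\pi,s^0)$ for every schedule $\tilde\pi$ and then specializes $\tilde\pi$ to an optimal schedule under $s^0$. But as written, your proposal both aims at a false intermediate claim and leaves its self-declared hardest step unprovable.
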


\begin{proof}
For a schedule $\pi$, we denote the set of jobs on machine $i$ as $M_i^{\pi}$, i.e., $j\in M_i^{\pi}$ if and only if $y_{i j} = 1$.

Given schedule $\pi$ and scenario $s^0$, we can obtain the critical machine $f$, $f = \underset{i\in M}{ \arg \max} F_i(\pi,s^0)$. Therefore the extreme scenario $s^f$ for schedule $\pi$ is obtained by letting $p_{i j}^{s^f}\leftarrow \overline{p}_{i j}$ for any job $j\in M_f^\pi$ and $i=f$; $p_{i j}^{s^f}\leftarrow \underline{p}_{i j}$, otherwise.

For any schedule $\tilde{\pi} = \{\tilde{x}_{i j k}, \tilde{y}_{i j}|i\in M, j,k\in N_0\}$, 

\begin{equation}
    \begin{array}{l}
         F(\tilde{\pi},s^f) - F(\tilde{\pi}, s^0)
         = \underset{i\in M}{\max}~ F_i(\tilde{\pi}, s^f) - \underset{i^{\prime}\in M}{\max}~ F_{i^{\prime}}(\tilde{\pi},s^0)
        \leq \underset{i\in M}{\max}~ \left\{F_i(\tilde{\pi},s^f) - F_i(\tilde{\pi},s^0) \right\}\\
        = \underset{i\in M}{\max} ~\{ ( \underset{j,k\in N_0}{\sum} s_{i j k}\tilde{x}_{i j k}  + \underset{j\in N}{\sum} p_{i j}^{s^f}\tilde{y}_{i j} )
        - ( \underset{j,k\in N_0 }{\sum} s_{i j k}\tilde{x}_{i j k} + \underset{j\in N}{\sum} p_{i j}^{s^0}\tilde{y}_{i j} ) \}\\
         = \underset{i\in M}{\max}~ \underset{j\in N}{\sum}(p_{i j}^{s^f} - p_{i j}^{s^0}) \tilde{y}_{i j}\\
         = \underset{i\in M}{\max} ~\{ \underset{j\in M_f^{\pi} }{\sum}   (p_{i j}^{s^f} - p_{i j}^{s^0}) \tilde{y}_{i j}
        + \underset{j\notin M_f^{\pi} }{\sum}   (p_{i j}^{s^f} - p_{i j}^{s^0}) \tilde{y}_{i j} \} \notag
    \end{array}{}
\end{equation}{}

Denote:
\begin{equation}
\begin{array}{l}
     \text{SUM1}_i = \underset{j\in M_f^{\pi}}{\sum} (p_{i j}^{s^f} - p_{i j}^{s^0}) \tilde{y}_{i j}\\
     \text{SUM2}_i = \underset{j\notin M_f^{\pi}}{\sum}   (p_{i j}^{s^f} - p_{i j}^{s^0}) \tilde{y}_{i j}\notag
\end{array}
\end{equation}

For the original $\pi = \{x_{i j k}, y_{i j}\}$,
\begin{equation}
\begin{array}{l}
     F(\pi,s^f) - F(\pi,s^0) 
     = \underset{i\in M}{\max} ~\{ \underset{j\in M_f^\pi}{\sum}   (p_{i j}^{s^f} - p_{i j}^{s^0}) y_{i j}
    + \underset{j\notin M_f^\pi}{\sum}   (p_{i j}^{s^f} - p_{i j}^{s^0}) y_{i j} \}\\
    =  \underset{j\in M_f^\pi}{\sum}   (p_{f j}^{s^f} - p_{f j}^{s^0}) y_{f j}
    + \underset{j\notin M_f^\pi}{\sum}   (p_{f j}^{s^f} - p_{f j}^{s^0}) y_{f j}\\
    =  \underset{j\in M_f^\pi}{\sum}   (\overline{p}_{f j} - p_{f j}^{s^0}) \cdot 1
    + \underset{j\notin M_f^\pi}{\sum}   (\underline{p}_{f j} - p_{f j}^{s^0})\cdot 0\notag
\end{array}{}
\end{equation}{}

Denote:
\begin{equation}
\begin{array}{l}
     \text{SUM1} =  \underset{j\in M_f^\pi}{\sum}   (\overline{p}_{f j} - p_{f j}^{s^0}) \cdot 1\\
     \text{SUM2} = \underset{j\notin M_f^\pi}{\sum}   (\underline{p}_{f j} - p_{f j}^{s^0})\cdot 0\notag
\end{array}
\end{equation}

If $i\neq f$, then 
$$\text{SUM1}_i\leq 0 \leq \text{SUM1}$$

If $i = f$, then
\begin{equation}\notag
\begin{array}{l}
    \text{SUM1}_i = \underset{j\in M_f^\pi}{\sum} (\overline{p}_{f j} - p_{f j}^{s^0}) \cdot \tilde{y}_{f j}
    \leq \underset{j\in M_f^\pi}{\sum}   (\overline{p}_{f j} - p_{f j }^{s^0}) \cdot 1 = \text{SUM1}
\end{array}
\end{equation}

So for any $i\in M$, $SUM1_i\leq SUM1$.

Next, for $j\notin M_f^\pi$, $p_{i j}^{s^f} - p_{i j}^{s^0} \leq 0\implies \text{SUM2}_i\leq 0 =\text{SUM2}$.

Now we obtain that 
\begin{equation}
\begin{array}{l}
     \underset{i\in M}{\max}~ (\text{SUM1}_i + \text{SUM2}_i) \leq \text{SUM1} + \text{SUM2}  
     \implies F(\tilde{\pi}, s^f) - F(\tilde{\pi}, s^0) \leq F(\pi, s^f) - F(\pi, s^0)\notag
\end{array}
\end{equation}

Let $\pi^{s^0}_*$ be an optimal solution under scenario $s^0$. So we have 
\begin{equation}
\begin{array}{l}
     F^{s^f}_* - F^{s^0}_* \leq F(\pi^{s^0}_*, s^f) - F(\pi^{s^0}_*, s^0) 
     \leq F(\pi, s^f) - F(\pi, s^0)  
     \implies F(\pi,s^0)-F^{s^0}_* \leq F(\pi,s^f)-F^{s^f}_* \notag
\end{array}
\end{equation}{}

Since $s^0$ is a worst-case scenario for schedule $\pi$, $s^f$ is also a worst-case scenario for $\pi$.
\end{proof}{}

With Theorem \ref{theorem:extreme}, we can limit the set of scenarios $S$ to the finite set of extreme scenarios. According to the way we define extreme scenarios, the size of all possible extreme scenarios is $m\cdot 2^n$ (There are $m$ possible critical machines; for each machine, each job could be on this machine or not, thus $2^n$). However, the critical machine is not known in advance. To evaluate a solution $\pi$, we need to traverse $m$ extreme scenarios $s^1,\cdots,s^m$ for $\pi$ to calculate the maximum regret, i.e., $R_{max}(\pi) = \underset{i\in M}{\max}~\left(F(\pi,s^i) - F^{s^i}_*\right)$. Due to the existence of the nested deterministic UPMSS for the value of $F^{s^i}_*$, it cannot be solved directly by the mixed-integer linear programming method. Therefore, effective methods need to be proposed to solve this problem.

\section{Enhanced Regret Evaluation Method}\label{section5}
With Theorem \ref{theorem:extreme}, we could limit the problem to the extreme scenarios. However, given a solution $\pi'$, the critical machine is not known in advance. To evaluate a solution $\pi'$, we need to traverse $m$ extreme scenarios $s^1,\cdots,s^m$ for $\pi'$ to calculate the maximum regret, i.e., $R_{\max}(\pi') = \underset{i\in M}{\max} \left\{ F(\pi',s^i) - F^{s^i}_*\right\}$. Due to the NP-hard nature of the deterministic UPMSS problem, the evaluation process is also NP-hard, which is computationally expensive because $m$ deterministic problems need to be solved. 

In this section, therefore, we seek to design a framework combining various strategies to accelerate the evaluation process. First, we propose a lower bound of the maximum regret for a neighborhood solution to help judge whether this solution is potential to be better and needs an accurate evaluation.
If a solution needs to be evaluated accurately, we further propose a theorem to reduce the number of machines to be traversed and eliminate some extreme scenarios that cannot be the worst-case scenarios.
And for each extreme scenario to be evaluated, a lower bound of the deterministic UPMSS(which yields an upper bound for the regret) to help judge whether the extreme scenario is possible to be the worst-case scenario and reduce the number of deterministic problem to be solved. After all the procedures above, we adopt the \underline{H}ybrid \underline{A}rtificial \underline{B}ee \underline{C}olony algorithm(HABC) proposed by \cite{lin2014abc} to further accelerate the efficiency of solving deterministic UPMSS. We conclude the framework with these strategies as the \underline{E}nhance \underline{R}egret \underline{E}valuation Method (ERE).

\subsection{Use a lower bound to abandon inferior neighborhood solutions}

Because the neighborhood could be very large and most of the neighborhood solutions would not lead to the improvement, we first develop a lower bound of the maximum regret for a specific neighborhood solution to exclude those inferior solutions which would definitely not improve the solution. 
Assume $\pi^*$ is the current best solution, and $\pi'$ is one of its neighborhood solution. We have
\begin{equation}
    R_{\max}(\pi')=\underset{i\in M}{\max}\{F(\pi',s^i)- F_*^{s^i} \}
    \geq \underset{i\in M}{\max}\{F(\pi',s^i)-F(\pi^*, s^i) \}\notag 
\end{equation}
Because $F(\pi^*, s^i) \geq F_*^{s^i}$, the inequality holds. Thus $R_{LB}(\pi',\pi^*)= \underset{i\in M}{\max}~\left\{ F(\pi',s^i)-F(\pi^*, s^i) \right\}$ is a lower bound for $R_{\max}(\pi')$. Because $\pi^*$ is already known in the neighborhood search process, $R_{LB}(\pi',\pi^*)$ is easy to calculate without calling the MIP solver. If $R_{LB}(\pi',\pi^*)>R_{\max}(\pi^*)$, $\pi'$ cannot be better than $\pi^*$ and there is no need to calculate the exact optimal value of the deterministic problem to evaluate this solution.

\subsection{Eliminate inferior extreme scenarios with Theorem \ref{theorem:5}} 
In the evaluation process of a certain solution, a total of $m$ extreme scenarios need to be traversed.
Each machine is assumed as the critical machine and the corresponding extreme scenario is used to calculate the regret so as to find the maximum regret of this solution.
For each machine, a deterministic UPMSS problem needs to be solved, which makes the evaluation process burdensome and time-consuming. Here we propose a theorem to reduce the number of machines to be traversed in the evaluation so as to accelerate the evaluation process. 

To introduce this theorem, first we need to calculate the machine completion time interval for each machine. Specifically for machine $i\in M$ in solution $\pi = \{X,Y\}$, the machine completion time lies in an interval $[\underline{F_i}(\pi), \overline{F_i}(\pi)]$ under the interval processing time. 
The minimum completion time of machine $i$ over all scenarios is $\underline{F_i}(\pi)=\sum_{j\in N_{0}, k \in N, k \neq j} s_{ijk} x_{ijk}+\sum_{j \in N} \underline{p}_{ij}y_{ij}$ and its maximum completion time is $\overline{F_i}(\pi)=\sum_{j\in N_{0}, k \in N, k \neq j} s_{ijk} x_{ijk}+\sum_{j \in N} \overline{p}_{ij}y_{ij}$. 
With the completion time interval of each machine, we have the following theorem:

\begin{theorem}\label{theorem:5}
For a given solution $\pi$ and machine $i$, if its completion time's upper bound $\overline{F_i}(\pi)$ is lower than $\underline{F_{i'}}(\pi)$ of any other machine $i'\in M, i'\neq i$, $s^i$ cannot be the worst-case scenario of solution $\pi$. 
\end{theorem}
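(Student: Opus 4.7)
The plan is to re-run the dominance argument from the proof of Theorem~\ref{theorem:extreme} with $s^{0}:=s^{i}$ and show that the critical machine it produces is strictly different from $i$, which yields a second extreme scenario whose regret is at least that of $s^{i}$.

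First, I would compute per-machine completion times under $s^{i}$. By the construction of the extreme scenario, $p_{ij}^{s^{i}}=\overline{p}_{ij}$ for every job $j$ that $\pi$ assigns to machine $i$, and $p_{i'j}^{s^{i}}=\underline{p}_{i'j}$ for every job that $\pi$ assigns to any other machine $i'$. Plugging these values into the formulas defining $\underline{F_{i}}(\pi)$ and $\overline{F_{i}}(\pi)$ gives
\begin{equation}
F_{i}(\pi,s^{i})=\overline{F_{i}}(\pi),\qquad F_{i'}(\pi,s^{i})=\underline{F_{i'}}(\pi)\text{ for every }i'\neq i.\notag
\end{equation}
The hypothesis $\overline{F_{i}}(\pi)<\underline{F_{i'}}(\pi)$ therefore reads $F_{i}(\pi,s^{i})<F_{i'}(\pi,s^{i})$, so $i$ is not a critical machine under $s^{i}$.

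Next, I would pick an actual critical machine $f\in\arg\max_{i''\in M}F_{i''}(\pi,s^{i})$; the previous paragraph forces $f\neq i$. I would then reuse the chain of inequalities established inside the proof of Theorem~\ref{theorem:extreme}, run with $s^{0}\leftarrow s^{i}$ and with this $f$ as the critical machine in $s^{0}$. That argument uses only the fact that $f$ is critical in $s^{0}$, not that $s^{0}$ itself is worst-case; applied to the optimal schedule $\pi_{*}^{s^{i}}$ in the role of $\tilde\pi$, it yields
\begin{equation}
F(\pi,s^{i})-F_{*}^{s^{i}}\;\le\;F(\pi,s^{f})-F_{*}^{s^{f}}.\notag
\end{equation}
Hence the regret at $s^{f}$ dominates that at $s^{i}$, and since $f\neq i$, the maximum of $R_{\max}(\pi)$ over the finite set $\{s^{1},\ldots,s^{m}\}$ is already attained at an extreme scenario different from $s^{i}$. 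This is exactly the sense in which $s^{i}$ can be dropped from the enumeration.

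The main obstacle is a reading one rather than a technical one: Theorem~\ref{theorem:extreme} is \emph{stated} as the implication ``$s^{0}$ worst-case $\Rightarrow$ $s^{f}$ worst-case'', whereas what I need is the stronger regret-dominance inequality $F(\pi,s^{0})-F_{*}^{s^{0}}\le F(\pi,s^{f})-F_{*}^{s^{f}}$ that its proof actually establishes for every $s^{0}$ once $f$ is chosen critical in $s^{0}$. Extracting this inequality instead of citing the theorem as a black box is the only step that requires care; the rest of the argument is routine bookkeeping on the completion-time formulas.
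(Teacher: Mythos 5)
Your proof is correct, but it takes a longer and more careful route than the paper's own argument. The paper's proof shares your first step---under $s^i$ one has $F_i(\pi,s^i)=\overline{F_i}(\pi)$ and $F_{i'}(\pi,s^i)=\underline{F_{i'}}(\pi)$, so the hypothesis says $i$ is not critical under $s^i$---but then it stops: it simply asserts that if $s^i$ were the worst-case scenario then $i$ would have to be the critical machine under $s^i$, and concludes by contraposition. That assertion is left unproved (and, read literally, is not quite true: $s^i$ could tie for the maximum regret without $i$ being critical under it). What you do instead is supply exactly the missing link: you take the true critical machine $f\neq i$ under $s^i$, observe that the inequality chain in the proof of Theorem~\ref{theorem:extreme} uses only the criticality of $f$ under the given scenario and not the scenario's worst-case property, and extract the regret-dominance inequality $F(\pi,s^i)-F_*^{s^i}\le F(\pi,s^f)-F_*^{s^f}$. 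This buys a rigorous justification of the elimination rule---the maximum over the extreme scenarios is attained at some $s^f$ with $f\neq i$, so $s^i$ may be skipped---at the cost of re-deriving part of Theorem~\ref{theorem:extreme}'s argument rather than finishing in two lines. Note that, like the paper, you only establish weak domination, so the literal claim that ``$s^i$ cannot be the worst-case scenario'' should really be read as ``$s^i$ need not be evaluated''; your closing remark makes this distinction honestly, which the paper's proof does not.
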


\begin{proof}
 For solution $\pi$, if $s^i$ is the worst-case scenario, machine $i$ should be the critical machine. Thus the completion time on different machines should satisfy $F_i(\pi,s^i) \geq F_{i'}(\pi,s^i)$, for $\forall i'\neq i$. According to (\ref{equ:extreme}), the way we define $s^i$ for solution $\pi$ suggests that $F_{i'}(\pi, s^i) = \underline{F_{i'}}(\pi)$, and $F_i(\pi, s^i) = \overline{F_i}(\pi)$. Since machine $i$ is the critical machine, which means $F_i(\pi,s^i) \geq F_{i'}(\pi,s^i)$, then we have $\overline{F_i}(\pi) \geq \underline{F_{i'}}(\pi)$. Thus we prove the theorem.
\end{proof}

According to Theorem \ref{theorem:5}, in the evaluation process, there is no need to calculate the regret with machine $i$ as the critical machine once we find a machine $i'$ that makes $\overline{F_i}(\pi) < \underline{F_{i'}}(\pi)$. Based on this dominance rule, we could eliminate some machines and corresponding extreme scenarios and reduce the evaluation time for a given solution. We use $S_{\psi}$ to denote the remained extreme scenarios for the subsequent evaluation of this solution.

\subsection{Use an upper bound to avoid inferior extreme scenarios} 

For those extreme scenarios in $S_{\psi}$ for solution $\pi$ that have not be eliminated by the dominance rule, We further use a lower bound of the deterministic UPMSS, which yields an upper bound for the regret of the corresponding scenario, to reduce the number of the deterministic problems to be solved in the evaluation of each extreme scenario.

Specifically, we calculate the regret $R_{\{1\}}(\pi)$ of one extreme scenario $s^{\{1\}} \in S_{\psi}$ first. For any other extreme scenario $s^{\{i\}} \in S_{\psi}$, we calculate a lower bound $LB^{s^{\{i\}}}$ of the deterministic problem under scenario $s^{\{i\}}$ and replace $F^{s^{\{i\}}}_*$ in $R_i(\pi)=F(\pi,s^{\{i\}}) - F^{s^{\{i\}}}_*$ with $LB^{s^{\{i\}}}$. Then we could obtain an upper bound of the maximum regret as $F(\pi,s^{\{i\}}) - LB^{s^{\{i\}}}$. If it is less than the current maximum regret, this extreme scenario cannot be the worst-case scenario. So we don't need to calculate $F^{s^{\{i\}}}_*$ of deterministic RUPMSS under extreme scenario $s^{\{i\}}$, which would further reduce the time to evaluate a certain solution.

We adopt a classical lower bound for the deterministic UPMSS has been used in many literature such as \cite{arnaout2010two}. The lower bound for UPMSS under extreme scenario $s$ is as follows:
\begin{enumerate}
    \item $LB_1^{s} = \frac{1}{M} \sum_{k=1}^{N} \min _{i\in M, j\in N} \{p_{ik}^{s}+s_{ij k}\}$;
    \item $LB_2^{s} = \max_{k\in N} \min_{i\in M, j\in N} \{p_{ik}^{s} + s_{ijk}\}$;
    \item $LB_3^{s}$: the optimal objective value of the LP relaxation of UPMSS.
\end{enumerate}
And $LB^{s} = \max\{LB_1^{s},LB_2^{s},LB_3^{s}\}$.

\subsection{Solve the deterministic problem with heuristics}

Despite all the mechanisms we have designed to accelerate the evaluation process, there remain a number of deterministic UPMSS problems to solve during every evaluation. Due to the NP-hard nature of the deterministic problem, it is very time consuming to obtain the exact solution of it. Thus, we substitute heuristics for the use of solvers like CPLEX to improve the time efficiency and to obtain a high-quality approximate solution. In this study, we choose the HABC algorithm proposed by Lin and Ying\cite{lin2014abc}, which yields a superior performance for UPMSS compared to many other algorithms like ACO, TS, RSA, and Meta-RaPS, etc. The HABC algorithm is a population-based metaheuristic algorithm that mimics the behaviors of a bee colony. It has been successfully applied to a variety of real world-problems including UPMSS. For more details of HABC, the readers could refer to Lin and Ying's work \cite{lin2014abc}.

Based on the methods above, given the current best solution $\pi^*$, the procedure of the enhanced regret evaluation method for a given solution $\pi$ is introduced as follows: 

\begin{eqnarray*}
\hline                               \\
\leftline{\emph{Enhanced Regret Evaluation Method (ERE)} }         \\
\hline
\end{eqnarray*}
\vspace{-0.7cm}
\begin{enumerate}
\setlength{\itemindent}{6pt}
\setlength{\labelsep}{0pt}
  \item[\emph{Step 1.}]\  Set possible extreme scenarios $S_{\psi}=\emptyset$,  $R_{\max}(\pi)=0$;
  \item[\emph{Step 2.}]\ Calculate the lower bound regret of solution $\pi$ as $R_{LB}(\pi,\pi^*) = \max_{i\in M}\{ F(\pi,s^i)-F(\pi^*, s^i)\}$. If $R_{LB}(\pi)>R_{\max}(\pi^*)$, abandon this solution $\pi$, stop.
  \item[\emph{Step 3.}]\ Calculate the machine completion time interval $[\underline{F_i}(\pi), \overline{F_i}(\pi)]$ for each machine $i\in M$. For each machine $i\in M$, if $\overline{F_i}(\pi)<\underline{F_{i'}}(\pi)$ of any other machine $i'\in M, i'\neq i$, move to the next machine; otherwise, add extreme scenario $s^i$ into $S_{\psi}$. If all machines have been fathomed, go to Step 4.
  \item[\emph{Step 4.}]\ Calculate the regret of first extreme scenario $s^{\{1\}}$ in $S_{\psi}$ as $R_{\{1\}}(\pi)=F(\pi,s^{\{1\}}) - F^{s^{\{1\}}}_*$. Here $F^{s^{\{1\}}}_*$ is calculated by the HABC algorithm. For each extreme scenario $s^{\{i\}} \in S_{\psi}, s^{\{i\}}\neq s^{\{1\}}$, calculate the lower bound $LB^{s^{\{i\}}}$ of deterministic UPMSS under scenario $s^{\{i\}}$. If $F(\pi,s^{\{i\}}) - LB^{s^{\{i\}}} \leq R_{\{1\}}(\pi)$, skip this extreme scenario, go to next extreme scenario in $ S_{\psi}$; otherwise calculate $F^{s^{\{i\}}}_*$ with HABC. If $R_{\{i\}}(\pi)=F(\pi,s^{\{i\}}) - F^{s^{\{i\}}}_*>R_{\max}(\pi)$, update $R_{\max}(\pi)\leftarrow R_{\{i\}}(\pi)$. If all scenarios in $S_{\psi}$ have been fathomed, go to Step 5.
  \item[\emph{Step 5.}]\  Terminate the algorithm and output the maximum regret $R_{\max}(\pi)$.
\end{enumerate}
\vspace{-0.2cm}
\begin{eqnarray*}
\hline
\end{eqnarray*}

\section{Multi-Start Decomposition-based Heuristic Algorithm}\label{section6}

In this section, we propose a \underline{M}ulti-start \underline{D}ecomposition-based \underline{H}euristic (MDH) algorithm to solve this problem. 
There are two things to be decided in the RUPMSS: (1) the assignment of jobs to machines (corresponding to decision variables $y_{ij}, \forall i\in M, j\in N$) and (2) the sequencing of the jobs on each machine given the job assignment (corresponding to decision variables $x_{ijk}, \forall i\in M, j, k\in N$). Given multiple initial solutions, the MDH algorithm searches for a better solution with the local search methods in each initial solution's neighborhood. Note that for any solution $\pi$, we use the ERE introduced in the previous section to calculate the maximum regret $R_{\max}(\pi)$. In the following, we investigate the job sequencing subproblem and the job assignment subproblem under RUPMSS, respectively. In addition, we introduce an upper bound of RUPMSS derived from one of the initial solutions of the MDH algorithm. Then we propose the MDH algorithm based on the findings.

\subsection{Job sequencing problem}

We first consider the sequence of jobs on each machine for a given job assignment. Specifically, a fixed job assignment means that for each machine, the jobs to be produced on it are known and unchanged. In this case, the production sequence of jobs on each machine needs to be decided to obtain a complete solution due to the existence of sequence-dependent setup times in RUPMSS, i.e., we need to decide $X$ given a fixed $\hat{Y}$. In this subsection, we'd like to prove that for each fixed assignment, only one optimal job sequence needs to be considered for the optimal solution of RUPMSS. Thus, we could decompose the problem into two stages. 

To give a detailed interpretation of the property we introduce above, first we need to give some new definitions. 

\begin{definition}
For a given job assignment $\hat{Y}$, define schedule $\hat{\pi}^{ \dagger } = \{\hat{X}^{\dagger} , \hat{Y}^{\dagger}\}$ as the optimal solution of the following MILP:
\begin{alignat}{1}
OptSeq(\hat{Y}):~~ & \label{C14} \min ~ \sum_{i\in M} \sum_{j,k\in N_0} s_{ijk} x_{ijk}\\
& \label{C15} y_{ij} = \hat{y}_{ij},\quad i\in M, j\in N\\
& C_k - C_j + V(1-x_{ijk}) \geq s_{ijk},\quad j\in N_0,k\in N, j\neq k, i\in M \\
& C_0 = 0\\
& x_{ijk}\in \{0,1\}, C_j\geq 0,\quad j,k\in N_0, i\in M\\
& \text{Constraints ($\ref{C3}$)($\ref{C4}$)($\ref{C5}$)} \notag
\end{alignat}
The objective ($\ref{C14}$) is the total setup time on all machines.
Constraints ($\ref{C15}$) ensure that the optimal schedule shares the same job assignment as $\hat{Y}$. The other constraints describe a feasible schedule as before. 
\end{definition}

$OptSeq(\hat{Y})$ is meant to find the schedule $\hat{\pi}^{\dagger}$ that minimizes the total setup time under the fixed job assignment $\hat{Y}$. Since the setup times are deterministic and irrelevant to the realisation of scenarios, $OptSeq(\hat{Y})$ is also irrelevant to the scenarios. Then we give the following theorem:

\begin{theorem}\label{theorem:2}
Denote the set of feasible schedules with the job assignment $\hat{Y}$ as $\Phi(\hat{Y})$. $ \arg \min_{\hat{\pi}\in \Phi(\hat{Y})} R_{\max}( \hat{\pi} )$ is obtained by solving $OptSeq(\hat{Y})$. In other words, if $\hat{\pi}^{\dagger}$ is the optimal solution of $OptSeq(\hat{Y})$, then $R_{\max}(\hat{\pi}^{\dagger})\leq R_{\max}(\hat{\pi}), \forall \hat{\pi} \in \Phi(\hat{Y})$.  
\end{theorem}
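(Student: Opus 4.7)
The plan is to exploit the fact that, once the job assignment $\hat{Y}$ is fixed, the sequencing variables $X$ influence the makespan only through the total setup time on each machine: the processing-time contribution $\sum_{j\in N}p_{ij}^s\hat{y}_{ij}$ on machine $i$ depends on $\hat{Y}$ alone and is insensitive to the sequencing. So I expect the argument to be a purely structural dominance argument, not requiring any machinery about worst-case scenarios beyond what is already in the paper.

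First I would observe that $OptSeq(\hat{Y})$ decomposes across machines. With $y_{ij}$ fixed, constraints (\ref{C3})--(\ref{C5}) restrict $\{x_{ijk}\}_{j,k\in N_0}$ on each machine $i$ to represent a valid processing order of exactly the jobs assigned to $i$, and these per-machine constraints do not interact. Because the objective $\sum_i\sum_{j,k}s_{ijk}x_{ijk}$ is also separable across machines, $OptSeq(\hat{Y})$ splits into $m$ independent single-machine (asymmetric-TSP-type) problems. Hence any optimal $\hat{\pi}^{\dagger}$ minimises the setup time on each machine $i$ separately, so that for every $\hat{\pi}\in\Phi(\hat{Y})$ and every $i\in M$,
\begin{equation*}
\sum_{j,k\in N_0}s_{ijk}\hat{x}^{\dagger}_{ijk}\;\leq\;\sum_{j,k\in N_0}s_{ijk}\hat{x}_{ijk}.
\end{equation*}

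Next I would lift this per-machine inequality to the makespan. Since all jobs are released at time $0$ and there are no precedence constraints, an optimal no-idle schedule is always feasible, so the machine completion time is simply
\begin{equation*}
F_i(\hat{\pi},s)=\sum_{j,k\in N_0}s_{ijk}\hat{x}_{ijk}+\sum_{j\in N}p_{ij}^s\hat{y}_{ij}.
\end{equation*}
The second sum depends only on $\hat{Y}$, so the inequality above gives $F_i(\hat{\pi}^{\dagger},s)\leq F_i(\hat{\pi},s)$ for every $i\in M$ and every $s\in S$. Taking the max over $i$ yields $F(\hat{\pi}^{\dagger},s)\leq F(\hat{\pi},s)$ for every $s$. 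Subtracting the schedule-independent quantity $F_*^s$ and taking the max over $s$ then gives $R_{\max}(\hat{\pi}^{\dagger})\leq R_{\max}(\hat{\pi})$, which is the claim.

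I do not anticipate a real obstacle. The one point to be careful about is the closed-form expression for $F_i(\hat{\pi},s)$, which is valid precisely under the standard no-idle interpretation of a schedule and corresponds to constraint (\ref{C8}) being tight on each machine. Everything else is a clean dominance argument, made possible because the setup-time contribution is scenario-independent and separable across machines, while $F_*^s$ is defined by an outer minimisation over all of $\Phi$ and is therefore unaffected by which $\hat{\pi}\in\Phi(\hat{Y})$ we plug in.
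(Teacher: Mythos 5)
Your proof is correct and follows essentially the same route as the paper: both rest on the per-machine separability of $OptSeq(\hat{Y})$, the fact that the processing-time contribution to $F_i$ depends only on the shared assignment $\hat{Y}$, and the schedule-independence of $F_*^s$. The only cosmetic difference is that you establish $F(\hat{\pi}^{\dagger},s)\leq F(\hat{\pi},s)$ uniformly in $s$ and then take the maximum, whereas the paper instantiates the same pointwise inequality at the single worst-case scenario $s^{0\dagger}$ of $\hat{\pi}^{\dagger}$ and bounds $R(\hat{\pi},s^{0\dagger})\leq R_{\max}(\hat{\pi})$ --- logically the same argument.
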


\begin{proof}
For a fixed job assignment $\hat{Y}$, it is evident that minimizing the total setup time on all the machines is equivalent to minimizing the setup time on each machine separately. Let $s^{0\dagger}$ be the worst-case scenario of the optimal schedule $\hat{\pi}^{\dagger}$ of $OptSeq(\hat{Y})$. For any $\hat{\pi}\in \Phi(\hat{Y})$,
\begin{alignat}{1}
F_i(\hat{\pi}^{\dagger} ,s^{0\dagger} ) & = \sum_{j,k\in N_0} s_{ijk}\hat{x}_{ijk}^{\dagger} + \sum_{j\in N_0} p^{s^{0\dagger} } \hat{y}_{ij} \notag \\
F_i(\hat{\pi} ,s^{0\dagger} ) &  = \sum_{j,k\in N_0} s_{ijk}\hat{x}_{ijk} + \sum_{j\in N_0} p^{s^{0\dagger} } \hat{y}_{ij} \notag\\
\implies & F_i(\hat{\pi}^{\dagger} ,s^{0\dagger} ) \leq F_i(\hat{\pi} ,s^{0\dagger} ),\quad \forall i\in M \notag\\
\implies & \label{C19} F(\hat{\pi}^{\dagger} ,s^{0\dagger} ) \leq F(\hat{\pi},s^{0\dagger} )
\end{alignat}
From ($\ref{C19}$), we could derive the following inequalities:
\begin{equation} \label{C20}
R_{\max}(\hat{\pi}^{\dagger} ) = F(\hat{\pi}^{\dagger}, s^{0\dagger} ) - F_*^{s^{0\dagger}} \leq F(\hat{\pi}, s^{0\dagger} ) - F_*^{s^{0\dagger}} 
= R(\hat{\pi} , s^{0\dagger}) \leq R_{\max}(\hat{\pi})
\end{equation}

\end{proof}

Theorem $\ref{theorem:2}$ gives us significant insight into the structure of RUPMSS: once we obtain a job assignment $\hat{Y}$, $OptSeq(\hat{Y})$ could yield a complete schedule $\hat{\pi}^{\dagger}$ with the smallest maximum regret in any schedules with job assignment $\hat{Y}$. So from now on, we only need to focus on finding the best possible $\hat{Y}$ that has the minimum $R_{\max}(\hat{\pi}^{\dagger})$, where $\hat{\pi}^{\dagger}$ is decided by $\hat{Y}$.

\subsection{Job assignment problem}

Now we come to the decision of job assignment, that is, to assign jobs to the unrelated parallel machines. 

We propose a simple local search method to search for a better job assignment, which includes two operators to conduct the local search: 
\begin{itemize}
    \item \textbf{Shift}: Moving a job from one machine to another one;
    \item \textbf{Interchange}: Swapping two jobs on different machines.
\end{itemize}
These two operators are simple and flexible, and have been used in many problems with similar settings such as the workload balancing problem on parallel machine \cite{cossari2013minimizing,cossari2012new}, the task allocation problem \cite{chen2000hybrid}, and the parallel machine scheduling problem \cite{yilmaz2014genetic,avalos2015efficient}. We use these two operators to search the neighborhood of the incumbent solution sequentially.
If a new assignment with the corresponding optimal sequence introduced earlier has a lower maximum regret, we obtain a better solution and accept it as a new incumbent solution. 

We further discuss under what condition the shift or interchange may be effective. Similar to \cite{xu2013robust}, the following theorems are derived to avoid ineffective search:

\begin{theorem}\label{Theorem:LS}
If a given solution could be improved by adjusting the job assignment, one of the adjusted jobs must be on the critical machine of the incumbent solution under its worst-case scenario. 
\end{theorem}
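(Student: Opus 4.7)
The plan is to prove the contrapositive: suppose $\pi$ is the incumbent schedule with critical machine $f$ under its worst-case extreme scenario $s^f$, and let $\pi'$ be obtained from $\pi$ by a shift or interchange in which no adjusted job is on machine $f$ (neither in $\pi$ nor in $\pi'$). The goal is then to show $R_{\max}(\pi') \geq R_{\max}(\pi)$, so no improvement is possible.

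The first key observation is that when the adjustment involves no job on machine $f$, the set of jobs assigned to $f$ is identical in $\pi$ and $\pi'$, i.e., $M_f^{\pi} = M_f^{\pi'}$. By Theorem \ref{theorem:2} (and the remark in its proof that $OptSeq(\hat{Y})$ decomposes across machines), the optimal sequence on $f$ depends only on the jobs assigned to $f$; thus the sequence, and consequently the total setup time on $f$, is the same under both $\pi$ and $\pi'$. Moreover, since the extreme scenario defined in Definition 2 for a candidate critical machine depends only on the jobs on that machine, the scenario $s^f$ associated with $\pi$ coincides with the analogous extreme scenario for $\pi'$; I will use this common scenario in what follows.

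Now I would compare the machine-$f$ completion times. Because the processing times assigned to jobs on $f$ under $s^f$ are their upper bounds in both schedules, and the sequence on $f$ is unchanged,
\begin{equation*}
F_f(\pi', s^f) \;=\; \overline{F_f}(\pi') \;=\; \overline{F_f}(\pi) \;=\; F_f(\pi, s^f) \;=\; F(\pi, s^f),
\end{equation*}
where the last equality uses that $f$ is critical for $\pi$ under $s^f$. Since $F(\pi', s^f) \geq F_f(\pi', s^f)$, we get $F(\pi', s^f) \geq F(\pi, s^f)$, and subtracting the common optimum $F_*^{s^f}$ yields $R(\pi', s^f) \geq R(\pi, s^f) = R_{\max}(\pi)$. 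Taking the maximum over extreme scenarios for $\pi'$ then gives $R_{\max}(\pi') \geq R_{\max}(\pi)$, which is the contrapositive we wanted.

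The main subtlety, and what I see as the principal thing to handle carefully, is making sure that the extreme scenario and the sequence on $f$ are genuinely invariant under the adjustment. Both of these rest on the fact that $M_f^{\pi} = M_f^{\pi'}$: the extreme scenario through Definition 2, and the sequence through the per-machine decomposition of $OptSeq(\hat{Y})$ from Theorem \ref{theorem:2}. Once those invariances are cleanly justified, the inequality chain above is essentially immediate, and the theorem follows.
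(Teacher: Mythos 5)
Your proof is correct and follows essentially the same route as the paper's: both hinge on the observation that an adjustment touching only non-critical machines leaves the critical machine's completion time unchanged, so $F(\pi',s^0)\geq F(\pi,s^0)$ and hence $R_{\max}(\pi')\geq R(\pi',s^0)\geq R(\pi,s^0)=R_{\max}(\pi)$; you phrase this as a contrapositive where the paper phrases it as a contradiction. Your explicit justification that the sequence and extreme scenario on machine $f$ are invariant (via $M_f^{\pi}=M_f^{\pi'}$ and the per-machine decomposition of $OptSeq$) is a point the paper leaves implicit, but it does not change the substance of the argument.
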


\begin{proof}
The proof is evident when there are only two machines. We need to consider the case where there are more than two machines.

Consider improving the solution $\pi$ by altering the assignment of jobs on the solution with worst-case scenario $s^0$.
We construct a new solution $\pi'$ by altering the job assignment on its non-critical machines, e.g., by shift or interchange, and we use ${s^{0}}'$ to denote the worst-case scenario for solution $\pi'$. Assume by contradiction that the maximum regret of $\pi'$ is less than that of $\pi$. Then we have
\begin{equation}\label{equation:jobassignment}
R(\pi,s^0) > R(\pi',{s^0}') \geq R(\pi',s^0).
\end{equation}

Given that the adjusted jobs are assigned on non-critical machines, the makespan on the critical machine remains unchanged. Thus the makespan in schedule $\pi'$ cannot be lower than that of $\pi$ under scenario $s^0$, i.e.,
$F(\pi',s^0) \geq F(\pi,s^0)$. Based on this result, we have:
$$
R(\pi',s^0)=F(\pi',s^0)-F^{s^0}_* \geq F(\pi,s^0)-F^{s^0}_*=R(\pi,s^0), \notag
$$
which contradicts (\ref{equation:jobassignment}). Thus an adjustment for the job assignment without adjusting the jobs on the critical machine under the worst-case scenario would not improve the solution. The proof is completed. 
\end{proof}

We note that Theorem \ref{Theorem:LS} could be true for any kind of adjustments besides shift and interchange. But in this problem we only adopt these two. Theorem \ref{Theorem:LS} suggests that for the shift operator, the job we choose to shift must be on the critical machine of the incumbent solution under its worst-case scenario, and for the interchange operator, one of the swapped jobs must come from the critical machine.

\subsubsection{Shift procedure}
For the shift local search, each of the $n$ jobs can be shifted to the other $m-1$ machines, which can generate a total of $(m-1)n$ different assignments in the neighborhood of the initial solution $\pi_0$. But according to Theorem \ref{Theorem:LS}, only shifting the job from the critical machine under the worst-case scenario to other machines may improve the incumbent solution. Therefore only $(m-1)n_c$ assignments adjustment are attempted and evaluated, where $n_c$ is the number of jobs on the critical machine. We denote the $p$th job on the critical machine as $j_{p}$. 
The procedure of shift local search is presented as follows:

\begin{eqnarray*}
\hline                               \\
\leftline{\emph{Shift Procedure} }         \\
\hline
\end{eqnarray*}
\vspace{-0.7cm}
\begin{enumerate}
\setlength{\itemindent}{6pt}
\setlength{\labelsep}{0pt}
\item[\emph{INPUT:}]\ initial solution $\pi_0$, its critical machine $i_c$, maximum regret $R_{max}(\pi_0)$, and the number of jobs on the critical machine $n_c$. Let the current solution $\pi\leftarrow \pi_0$.
  \item[\emph{Step 1.}]\ Set $p\leftarrow 0$.
  \item[\emph{Step 2.}]\ Set $p\leftarrow p+1$, $i\leftarrow 0$. If $p>n_c$ go to Step 6. 
  \item[\emph{Step 3.}]\ Set $i\leftarrow i+1$. If $i>m$, go to Step 2. If $i=i_c$, repeat Step 3. 
  \item[\emph{Step 4.}]\ Construct a new solution $\pi'$ by shifting the job $j_{p}$ to machine $i$ and solving the single machine scheduling problem under any scenario for each machine. Evaluate solution $\pi'$ and obtain its critical machine $i_c'$, maximum regret $R_{\max}(\pi')$, and the number of jobs on critical machine $n_c'$.
  \item[\emph{Step 5.}]\ If $R_{\max}(\pi')<R_{\max}(\pi)$, set $\pi \leftarrow \pi'$, $R_{\max}(\pi)\leftarrow R_{\max}(\pi')$, $i_c \leftarrow i_c'$, $n_c\leftarrow n_c'$, go to Step 1. Otherwise, go to Step 3.
  \item[\emph{Step 6.}]\ Terminate the algorithm and output $\pi$, $R_{\max}(\pi)$, and $i_c$.
\end{enumerate}
\vspace{-0.2cm}
\begin{eqnarray*}
\hline
\end{eqnarray*}

\subsubsection{Interchange procedure}
For the interchange local search, we traverse each pair of jobs ($C_n^2=n(n-1)/2$ in total) to see if it satisfy Theorem \ref{Theorem:LS}. Only when two jobs on different machines and one of them is on the critical machine under the worst case scenario, the interchange is conducted to see whether it could improve the incumbent solution. According to Theorem \ref{Theorem:LS}, only $(n-n_c)n_c$ assignments in interchange neighborhood may bring improvement and are evaluated. 

The procedure of interchange local search is presented as follows:

\begin{eqnarray*}
\hline                               \\
\leftline{\emph{Interchange Procedure} }         \\
\hline
\end{eqnarray*}
\vspace{-0.7cm}
\begin{enumerate}
\setlength{\itemindent}{6pt}
\setlength{\labelsep}{0pt}
  \item[\emph{INPUT:}]\ initial solution $\pi_0$, its critical machine $i_c$, maximum regret $R_{max}(\pi_0)$, and the number of jobs on the critical machine $n_c$. Let the current solution $\pi\leftarrow \pi_0$.
  \item[\emph{Step 1.}]\ Set $l\leftarrow 0$, $k\leftarrow 1$, $j\leftarrow 2$.
  \item[\emph{Step 2.}]\ Set $l\leftarrow l+1$. If $l>\frac{(n-1)n}{2}$ go to Step 7. 
 \item[\emph{Step 3.}]\ If $j>n$, set $k\leftarrow k+1$. If $k>n$, set $k\leftarrow 1$. Then set $j\leftarrow k+1$.
  \item[\emph{Step 4.}]\ Judge whether both jobs $j$ and $k$ are on the same machine, or neither of them is on the critical machine $i_c$. If it is true, $j\leftarrow j+1$, go to Step 2.
  \item[\emph{Step 5.}]\ Interchange job $j$ and $k$ in solution $\pi$ and solve the single machine scheduling problem under any scenario for each machine to construct a new solution $\pi'$. Evaluate solution $\pi'$ and obtain its critical machine $i_c'$, maximum regret $R_{\max}(\pi')$.
  \item[\emph{Step 6.}]\ If $R_{\max}(\pi')<R_{\max}(\pi)$, set $\pi \leftarrow \pi'$, $R_{\max}(\pi)\leftarrow R_{\max}(\pi')$, $i_c \leftarrow i_c'$, and $l\leftarrow 0$. Go to Step 2. 
  \item[\emph{Step 7.}]\ Terminate the algorithm and output $\pi, R_{\max}(\pi)$.
\end{enumerate}
\vspace{-0.2cm}
\begin{eqnarray*}
\hline
\end{eqnarray*}


\subsection{The upper bound from the initial solution under mid-scenario}

The efficiency of the local search algorithm depends on the choice of the initial solution. Here we introduce an initial solution that could provide an upper bound for this problem. We generate an initial solution by solving the deterministic problem under the mid-scenario $s^{\frac{1}{2}}:=\{p_{ij}^{s^{\frac{1}{2}}} = \frac{\overline{p}_{ij} + \underline{p}_{ij}}{2},s_{ijk}, \forall i\in M,j,k\in N\}$. Theorem \ref{Theorem:midpoint} below shows that this solution provides an upper bound for RUPMSS. 

The proof of the upper bound is similar to \cite{xu2013robust}, but modified for this problem. Before describing the theorem, we first give some notations as below:

\begin{description} 
  \item[$\alpha_{ij}$] the percentage deviation between the upper bound and lower bound of processing time of job $j$ on machine $i$, i.e., $\alpha_{ij} = \frac{\overline{p}_{ij} - \underline{p}_{ij}}{\underline{p}_{ij}}$;
    \item[$\alpha$] the maximum percentage deviation between the upper bound and lower bound for all processing times, i.e., $\alpha = \underset{i\in M,j\in N}{\max} \alpha_{i j}$;
    \item[$\pi^s_*$] the optimal solution for the deterministic UPMSS problem under scenario $s$.
\end{description}

We can derive that

$$\overline{p}_{ij} = (1+\frac{\alpha_{ij}}{2+\alpha_{ij}}) p_{ij}^{s^{\frac{1}{2}}}, \underline{p}_{ij} = \frac{2}{2+\alpha_{ij}} p_{ij}^{s^{\frac{1}{2}}}.$$

Next we define some new scenarios:

\begin{description}
  \item[$s^{\frac{1}{2}}$] $p_{ij}^{s^{\frac{1}{2}}} = \frac{\overline{p}_{ij} + \underline{p}_{ij}}{2}$;  $s_{ijk}^{s^{\frac{1}{2}}} = s_{ijk}$;
  \item[$\overline{s}$] $p_{ij}^{\overline{s}} = \overline{p}_{ij}$; $s_{ijk}^{\overline{s}} = s_{ijk}$;
  \item[$\underline{s}$] $p_{ij}^{\underline{s}} = \underline{p}_{ij}$; $s_{ijk}^{\underline{s}} = s_{ijk}$;
  \item[$\overline{s}^{\alpha}$] $p_{ij}^{\overline{s}^{\alpha}} = (1+\frac{\alpha}{2+\alpha}) p_{ij}^{s^{\frac{1}{2}}} \geq p_{ij}^{\overline{s}}$; $s_{ijk}^{\overline{s}^{\alpha}} = (1+\frac{\alpha}{2+\alpha}) s_{ijk} \geq s_{ijk}^{\overline{s}}$;
  \item[$\underline{s}^{\alpha}$] $p_{ij}^{\underline{s}^{\alpha}} = (\frac{2}{2+\alpha}) p_{ij}^{s^{\frac{1}{2}}} \leq p_{ij}^{\underline{s}}$; $s_{ijk}^{\underline{s}^{\alpha}} = (\frac{2}{2+\alpha}) s_{ijk} \leq  s_{ijk}^{\underline{s}}$;
\end{description}

\begin{theorem}\label{Theorem:midpoint}
$\frac{2\alpha}{2 + \alpha} F^{s^{\frac{1}{2}}}_*$ is an upper bound for the maximum regret of the mid-scenario initial solution, i.e., $R_{\max}(\pi^{s^{\frac{1}{2}}}_*) \leq  \frac{2\alpha}{2 + \alpha} F^{s^{\frac{1}{2}}}_*$.
\end{theorem}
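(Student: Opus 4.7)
The plan is to sandwich every real scenario $s\in S$ between the two hypothetical scenarios $\overline{s}^{\alpha}$ and $\underline{s}^{\alpha}$ introduced just above the theorem, which are \emph{uniform} scalings of the mid-scenario $s^{\frac{1}{2}}$ by the factors $c:=\frac{2+2\alpha}{2+\alpha}=1+\frac{\alpha}{2+\alpha}$ and $c':=\frac{2}{2+\alpha}$ respectively. The crux of the argument is that because both processing times and setup times are scaled by the same factor, the makespan of any fixed schedule scales by the same factor, and the deterministic optimum is attained by the \emph{same} schedule $\pi^{s^{\frac{1}{2}}}_*$ under all three scenarios $\underline{s}^{\alpha}$, $s^{\frac{1}{2}}$, and $\overline{s}^{\alpha}$. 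In particular, I would derive the identities $F_*^{\overline{s}^{\alpha}} = c\cdot F_*^{s^{\frac{1}{2}}}$ and $F_*^{\underline{s}^{\alpha}} = c'\cdot F_*^{s^{\frac{1}{2}}}$, whose difference is precisely $\frac{2\alpha}{2+\alpha}F_*^{s^{\frac{1}{2}}}$.

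Next, I would combine the scaling identities with monotonicity. Since $\alpha=\max_{i,j}\alpha_{ij}$ and $x\mapsto\frac{x}{2+x}$ is increasing, the definitions give $p_{ij}^{\underline{s}^{\alpha}}\le \underline{p}_{ij}\le p_{ij}^{s}\le \overline{p}_{ij}\le p_{ij}^{\overline{s}^{\alpha}}$ for every real scenario $s\in S$, and analogously $s_{ijk}^{\underline{s}^{\alpha}}\le s_{ijk}\le s_{ijk}^{\overline{s}^{\alpha}}$. Because the makespan of any schedule is a nonnegative linear combination of processing and setup times along each machine's sequence, these pointwise inequalities yield $F(\pi,\underline{s}^{\alpha})\le F(\pi,s)\le F(\pi,\overline{s}^{\alpha})$ for every schedule $\pi$ and every $s\in S$. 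Two consequences that I need are: (i) $F(\pi^{s^{\frac{1}{2}}}_*,s)\le F(\pi^{s^{\frac{1}{2}}}_*,\overline{s}^{\alpha})=c\cdot F_*^{s^{\frac{1}{2}}}$, and (ii) $F_*^{s}=F(\pi^s_*,s)\ge F(\pi^s_*,\underline{s}^{\alpha})\ge F_*^{\underline{s}^{\alpha}}=c'\cdot F_*^{s^{\frac{1}{2}}}$.

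Subtracting (ii) from (i) gives $F(\pi^{s^{\frac{1}{2}}}_*,s)-F_*^{s}\le (c-c')F_*^{s^{\frac{1}{2}}} = \frac{2\alpha}{2+\alpha}F_*^{s^{\frac{1}{2}}}$ for every $s\in S$; taking the maximum over scenarios yields the claimed bound on $R_{\max}(\pi^{s^{\frac{1}{2}}}_*)$. I expect the main obstacle to be step one, namely being meticulous that the scaling from $s^{\frac{1}{2}}$ to $\overline{s}^{\alpha}$ and $\underline{s}^{\alpha}$ acts uniformly on setup times as well as processing times; without this, the scaling identities $F_*^{\overline{s}^{\alpha}}=cF_*^{s^{\frac{1}{2}}}$ and $F_*^{\underline{s}^{\alpha}}=c'F_*^{s^{\frac{1}{2}}}$ would fail, and the sandwich would break. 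Once this uniform-scaling observation is secured, the rest of the argument is a short monotonicity-and-subtraction chain essentially parallel to the parallel-machine argument of Xu et al.\ \cite{xu2013robust}, adapted to accommodate the sequence-dependent setup terms.
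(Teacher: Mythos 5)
Your proposal is correct and follows essentially the same route as the paper's proof: the same uniformly scaled scenarios $\overline{s}^{\alpha}$ and $\underline{s}^{\alpha}$, the same observation that the optimizer is shared across the three proportional scenarios, and the same sandwich argument, with the only difference being that you bound $F(\pi^{s^{1/2}}_*,s)-F_*^{s}$ directly for every $s\in S$ rather than passing through a worst-case scenario $s^0$ and the intermediate scenarios $\overline{s}$, $\underline{s}$ as the paper does. Your emphasis on the setup times being scaled by the same factor is exactly the point on which the paper's equalities $F(\pi,\overline{s}^{\alpha})=(1+\tfrac{\alpha}{2+\alpha})F(\pi,s^{\frac{1}{2}})$ and $F(\pi,\underline{s}^{\alpha})=\tfrac{2}{2+\alpha}F(\pi,s^{\frac{1}{2}})$ rest.
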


\begin{proof}
Note that the processing time and setup time of scenarios $\overline{s}^{\alpha}, \underline{s}^{\alpha}$, and $s^{\frac{1}{2}}$ are proportional to each other. For any schedule $\pi$,

\begin{equation}\label{equation:upper}
F(\pi,\overline{s}^{\alpha}) = (1 + \frac{\alpha}{2+\alpha}) F(\pi, s^{\frac{1}{2}});
\end{equation}
    
\begin{equation}\label{equation:lower}
F(\pi,\underline{s}^{\alpha}) = \frac{2}{2+\alpha} F(\pi, s^{\frac{1}{2}});
\end{equation}

\begin{equation}\label{equation:upper2}
F(\pi,\overline{s}^{\alpha}) \geq F(\pi, \overline{s});
\end{equation}

\begin{equation}\label{equation:lower2}
F(\pi,\underline{s}^{\alpha}) \leq F(\pi, \underline{s}).
\end{equation}

So there exists an identical optimal solution $\pi^*$ for deterministic problems under all the scenarios $\overline{s}^{\alpha}, \underline{s}^{\alpha}$, and $s^{\frac{1}{2}}$, i.e., there exists an optimal schedule $\pi^* = \pi_*^{s^{\frac{1}{2}}} = \pi_*^{\overline{s}^{\alpha}} = \pi_*^{\underline{s}^{\alpha}}$. Suppose $s^0$ is a worst-case scenario for $\pi^*$. First we obtain the inequalities:
\begin{equation}
    F^{\underline{s}}_* = F(\pi^{\underline{s}}_*, \underline{s}) \leq F(\pi_*^{s^0}, \underline{s}) \leq F(\pi_*^{s^0}, s^0) = F^{s^0}_*
\end{equation} 
\begin{equation}\label{equation:Rmax}
    \implies R_{\max}(\pi^*) = F(\pi^*,s^0) - F^{s^0}_* \leq F(\pi^*,\overline{s}) - F^{\underline{s}}_* 
\end{equation}
Similarly, 
\begin{equation}\label{equation:strict}
F^{\underline{s}^{\alpha}}_* = F(\pi^*, \underline{s}^{\alpha}) \leq F(\pi_*^{\underline{s}}, \underline{s}^{\alpha}) \leq  F(\pi_*^{\underline{s}}, \underline{s}) = F^{\underline{s}}_* 
\end{equation}
From (\ref{equation:upper})(\ref{equation:lower})(\ref{equation:upper2})(\ref{equation:lower2})(\ref{equation:strict}) we obtain
\begin{equation}\label{equation:finalresult}
\begin{array}{l}
     F(\pi^*,\overline{s}) - F^{\underline{s}}_* \leq F(\pi^*,\overline{s}^{\alpha}) - F^{\underline{s}^{\alpha}}_* 
     = F(\pi^*, \overline{s}^{\alpha}) - F(\pi^* , \underline{s}^{\alpha}) \\
     = (1 + \frac{\alpha}{2+\alpha})F(\pi^*,s^{\frac{1}{2}}) - \frac{2}{2+\alpha} F(\pi^*, s^{\frac{1}{2}}) 
     = \frac{2\alpha}{2+\alpha} F^{s^{\frac{1}{2}}}_* 
\end{array}
\end{equation}
From (\ref{equation:Rmax})(\ref{equation:finalresult}) we further obtain
\begin{equation}\label{equation:upperbound}
    R_{\max}(\pi^*) \leq \frac{2\alpha}{2+\alpha} F^{s^{\frac{1}{2}}}_* 
\end{equation}

Equivalently, 

\begin{equation}
    R_{\max}(\pi_*^{s^{\frac{1}{2}}}) \leq \frac{2\alpha}{2+\alpha} F^{s^{\frac{1}{2}}}_* 
\end{equation}

\end{proof}

Theorem \ref{Theorem:midpoint} makes sure that our choice of initial solutions guarantees a performance no worse than the upper bound.

\subsection{Multi-start decomposition heuristic algorithm}

Heuristic algorithms that intend to find global optimal solutions usually require some type of diversification method to avoid being stuck in the local optima \cite{marti2003multi}. One method to achieve diversity is to re-conduct the search procedure from a different initial solution once a previous neighborhood has been explored. 
The multi-start method is a framework that executes multiple times of procedure from different initial points in the solution space \cite{avalos2015efficient}, which has been used for many combinatorial optimization problems. Readers could refer to the survey paper \cite{marti2013multi}, which exhibits its flexibility and capability . 

To solve the RUPMSS, we propose a \underline{M}ulti-start \underline{D}ecomposition-based \underline{H}euristic (MDH) algorithm based on the theorems and the methods introduced above. In each iteration of the MDH method, an initial solution in the solution space is generated first. Then the shift local search and interchange local search are conducted respectively to search its neighborhood and seek improvement. After each iteration, a solution is obtained which is typically a local optimal solution, and the best solution obtained after all iterations is the output of the MDH algorithm.

In the MDH algorithm, we obtain the initial solutions by solving a series of deterministic UPMSS problems under certain scenarios. One of the initial solution is the optimal solution under the mid-scenario, which could guarantee that the maximum regret of the final solution we find cannot be worse than the upper bound given by Theorem \ref{Theorem:midpoint}. Another two specified scenarios including the upper-bound scenario (all the processing times take the upper bound value of the interval) and the lower-bound scenario (all the processing time take the lower bound value of the interval) are also considered to obtain the initial solutions.  
Then other initial solutions are obtained by solving the deterministic UPMSS under several scenarios $s$ in which the processing time is randomly chosen within its interval. Those initial solutions identical to the previous ones are not considered for the subsequent exploration.

In the MDH algorithm, we use $Init$ to denote the total number of initial solutions we generate and ${\pi_{0}}_r$ to denote the $r$-th initial solution we obtain. Similarly, $\pi^*$ denote the current best solution and $R_{\max}(\pi^*)$ represents its maximum regret. The algorithm will terminate when the running time reaches the time limit. The procedures of the MDH algorithm are presented as follows: 
 
\begin{eqnarray*}
\hline                               \\
\leftline{\emph{The Multi-Start Decomposition Heuristic Algorithm (MDH)} }         \\
\hline
\end{eqnarray*}
\vspace{-0.7cm}
\begin{enumerate}
\setlength{\itemindent}{6pt}
\setlength{\labelsep}{0pt}
  \item[\emph{Step 1.}]\  Set $r\leftarrow 0$, $R_{\max}(\pi^*)\leftarrow Big Number$. Initialize $Init$;
  \item[\emph{Step 2.}]\ Set $r\leftarrow r+1$, If $r>Init$, go to Step 6; otherwise solve a deterministic UPMSS under a scenario to obtain the initial solution ${\pi_{0}}_r$. If ${\pi_{0}}_r$ has occurred before, repeat Step 2.
  \item[\emph{Step 3.}]\  Conduct the shift local search for the solution ${\pi_{0}}_r$ to obtain a solution ${\pi_{0}}_r'$ and its maximum regret $R_{\max}({\pi_{0}}_r')$. 
  \item[\emph{Step 4.}]\  Conduct the interchange local search for the solution ${\pi_{0}}_r'$ to obtain a solution ${\pi_{0}}_r''$ and its maximum regret $R_{\max}({\pi_{0}}_r'')$. 
  \item[\emph{Step 5.}]\ If $R_{\max}({\pi_{0}}_r'')<R_{\max}(\pi^*)$, update $\pi^* \leftarrow {\pi_{0}}_r''$, $R_{\max}(\pi^*)\leftarrow R_{\max}({\pi_{0}}_r'')$, go to Step 2.
  \item[\emph{Step 6.}]\ Terminate the algorithm and output the solution $\pi^*$ and its maximum regret $R_{\max}(\pi^*)$.
\end{enumerate}
\vspace{-0.2cm}
\begin{eqnarray*}
\hline
\end{eqnarray*}

\section{Numerical Experiments}\label{section7}

In this section, we conduct numerical experiments to test the effectiveness and efficiency of the algorithms we propose. The MIP solver we use is ILOG CPLEX 12.8. The algorithms are coded with C++ and the experiments are conducted on a 64-bit Windows 10 platform with Intel i7 3.4-GHz CPU and 16.0 GB RAM.

We generate a data set whose scale is in line with the number of the tasks in a few days in the factory we investigate. The numbers of jobs and machines are combinations of $n\in \{9,12,15,20,25,30\}$, $m\in \{3,5\}$. The parameters are chosen from uniform distributions: $s_{ijk}\in [1, 10]$, $\underline{p}_{ij} \in [1, 50]$, and  $\overline{p}_{ij} \in [\underline{p}_{ij}, 2\underline{p}_{ij}]$ for any $i\in M$ and $j, k \in N$. The intervals from which the parameters take value are in coordinate with the data sets from classic literature addressing the deterministic UPMSS, e.g., Fanjul-Peyro et al.'s work \cite{fanjul2019reformulations}. We choose the processing times' lower bounds from a wilder range than the range of the setup times because we want the focus of this study, namely the uncertain processing times, to dominate the experiments. For the support of the random processing times, we choose the upper bound to be two times of the lower bounds. This implementation prevents the model from being too conservative for it overlooks the extreme outlier cases that are unlikely to occur in reality, e.g., the realisation of the processing time is ten times of the lower bound. 20 instances are generated for each combination ``$n-m$'', and a total of 240 instances are generated for the experiments. 

In the following experiments, the time limits for all the algorithms are set to 2 hours. The total number of initial solutions generated in the MDH algorithm $Init$ is set to 5 to achieve the trade off between the effectiveness and efficiency. 

We first conduct an experiment to verify the effectiveness of the mechanisms designed in the enhanced regret evaluation method. We use nine groups of small scale instances, run the MDH algorithm with one initial solution under the mid-scenario, and compare the running time of the algorithm with different mechanisms adopted in the regret evaluation process, which are represented with the following notations:
\begin{itemize}
    \item \emph{ORIG}: the basic evaluation method with no enhanced method(traverse and calculate the regret of every extreme scenario of a solution);
    \item \emph{M3}: use the only method introduced in subsection E.3; namely, only use the lower bound of the deterministic problem to avoid extreme scenarios that are impossible to be the worst-case scenario;
    \item \emph{M23}: use methods introduced in subsection E.2 and E.3; that's to say, use the lower bound of the deterministic problem and Theorem \ref{theorem:5} to eliminate extreme scenarios;
    \item \emph{M123}: use methods introduced in subsection E.1, E.2, and E.3;
    \item \emph{M1234}: use the complete procedure of the enhanced regret evaluation method.
\end{itemize}

The results are presented in Table \ref{Table:Evaluation}. Subcolumn ``\emph{time(s)}'' presents the average running time of the algorithm with the corresponding mechanisms. Subcolumn ``\emph{Num}'' reports the average number of deterministic UPMSS problems that are solved for the evaluation in the solving process.

\begin{table}[htbp]
\centering
\addtolength{\tabcolsep}{-1pt}
\caption{Comparison of different evaluation mechanisms}\label{Table:Evaluation}
{
\begin{tabular}{crrrrrrrrrrrrrrr}
\hline 
&&\multicolumn{2}{c}{\emph{ORGI}}&&\multicolumn{2}{c}{\emph{M3}}&&\multicolumn{2}{c}{\emph{M23}}&&\multicolumn{2}{c}{\emph{M123}}&&\multicolumn{2}{c}{\emph{M1234}}\\
\cline{3-4} \cline{6-7} \cline{9-10} \cline{12-13} \cline{15-16}
\emph{m}&	\emph{n}	&	\emph{time(s)}	&  \emph{num} &&	\emph{time(s)}	&  \emph{num}	&&\emph{time(s)}& \emph{num}&&	\emph{time(s)}	&  \emph{num} &&	\emph{time(s)}	&  \emph{num}	\\
\hline
	&	9	&	9.78 	&	84.90 	&&	7.22 	&	56.30 	&&	5.94 	&	44.30 	&&	3.16 	&	12.25 	&&	4.97 	&	12.05 	\\
3	&	15	&	56.23 	&	206.70 	&&	37.76 	&	133.90 	&&	35.61 	&	120.80 	&&	18.05 	&	52.30 	&&	30.71 	&	45.70 	\\
	&	20	&	264.63 	&	370.35 	&&	178.90 	&	238.10 	&&	168.82 	&	232.55 	&&	113.17 	&	149.45 	&&	162.80 	&	151.75 	\\
\\
	&	9	&	21.66 	&	114.50 	&&	14.51 	&	68.80 	&&	8.20 	&	35.20 	&&	3.55 	&	6.85 	&&	7.28 	&	7.45 	\\
5	&	15	&	325.33 	&	289.50 	&&	180.27 	&	158.00 	&&	114.99 	&	97.80 	&&	32.93 	&	22.65 	&&	26.57 	&	14.10 	\\
	&	20	&	3613.21 	&	507.00 	&&	2228.07 	&	252.40 	&&	1735.06 	&	194.00 	&&	518.24 	&	63.55 	&&	125.94 	&	42.50 	\\
\\
	&	9	&	45.34 	&	165.20 	&&	19.29 	&	103.90 	&&	7.47 	&	34.90 	&&	7.80 	&	3.20 	&&	9.26 	&	4.40 	\\
7	&	15	&	591.22 	&	323.05 	&&	309.69 	&	169.30 	&&	138.48 	&	75.20 	&&	12.15 	&	27.61 	&&	52.74 	&	17.10 	\\
	&	20	&	8625.93 	&	518.35 	&&	4600.44 	&	259.45 	&&	4164.67 	&	140.40 	&&	777.43 	&	31.15 	&&	117.33 	&	19.80 	\\
\hline
\end{tabular}}
\end{table}

From the results, we could see that from \emph{M1} to \emph{M123}, each mechanism contributes to the reduction of the running time and the number of deterministic UPMSS problems solved. As the data scale increases, the proportion of the reduced running time increases. This result also reflects that the evaluation process accounts for most of the running time of our MDH framework. As for \emph{M123} and \emph{M1234}, \emph{M1234} doesn't perform better than \emph{M123} until the data scale surpasses $m\cdot n^2 = 1200$. That is because the HABC algorithm adopted in subsection E.4 doesn't work better than the CPLEX solver for problems with a small data scale. When the data scale increases, as we can see in the cases where $n=20,m=5$ and $n=20,m=7$, HABC massively reduces the algorithm running time, so it contributes a lot to the efficiency of the regret evaluation. Overall, we believe that implementing HABC to replace the CPLEX solver suits the real needs in the industry, where the data scale is usually beyond the ability of the solvers.

To sum up, the enhanced regret evaluation method is an critical step to improve the efficiency of the MDH algorithm. With the implement of the enhancement mechanisms, the evaluation time and the overall running time of the MDH algorithm could be reduced significantly.

\begin{figure}[htbp]
\centering
\subfloat[]{\includegraphics[height=2in]{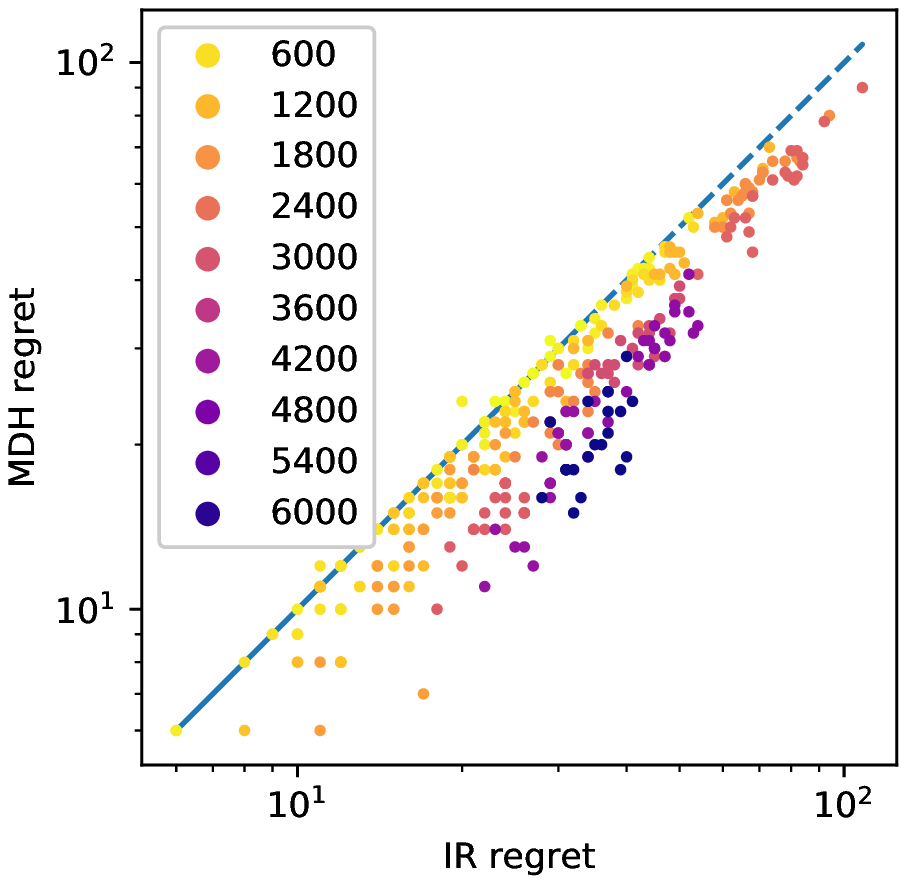}
\label{fig1}}
\hfil
\subfloat[]{\includegraphics[height=2in]{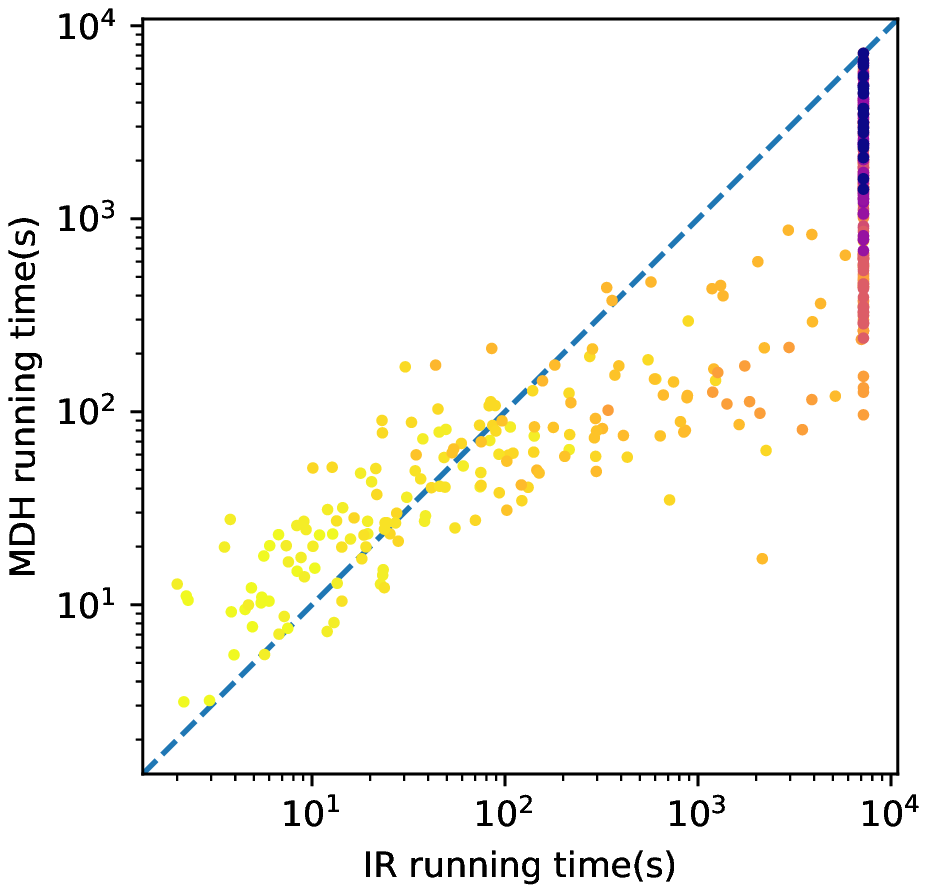}%
\label{fig2}}
\hfil
\subfloat[]{\includegraphics[height=2in]{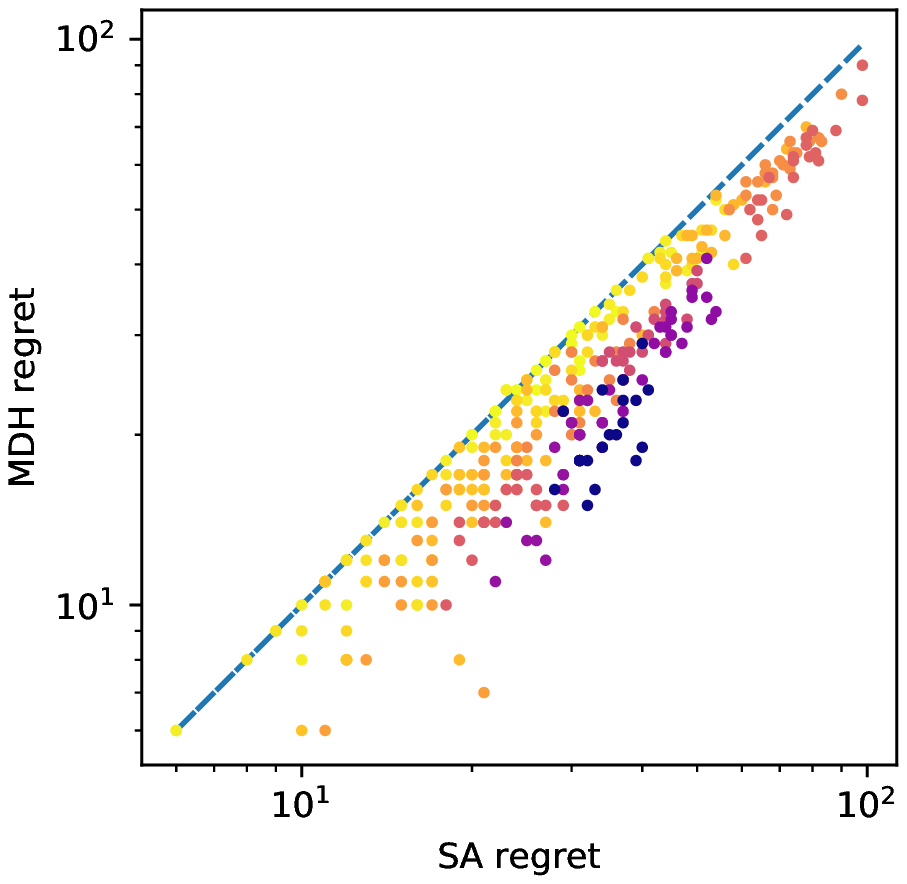}%
\label{fig3}}
\hfil
\subfloat[]{\includegraphics[height=2in]{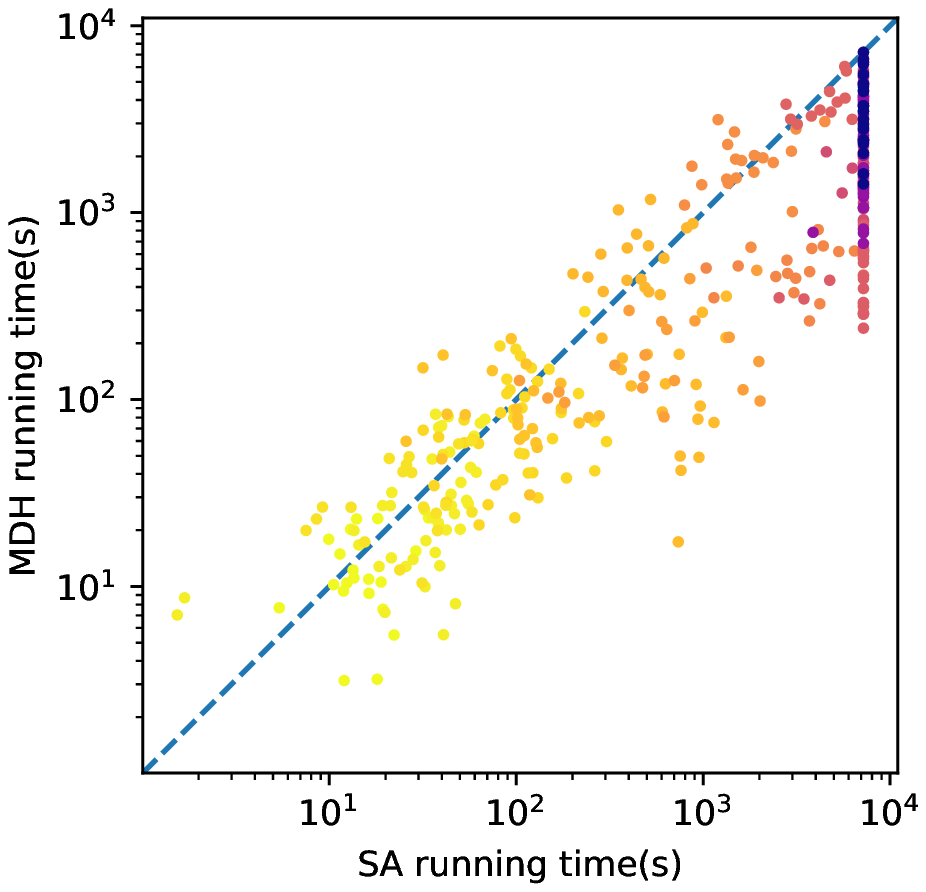}%
\label{fig4}}
\caption{Comparison between MDH, IR, and SA.  (a) Regret Comarpison: MDH vs IR. (b) Time Comparison: MDH vs IR. (c) Regret Comparison: MDH vs SA. (d) Time Comparison: MDH vs SA. }
\label{fig:Comparison}
\end{figure} 

In the following experiment, we mainly compare the results of MDH with the \underline{I}terative \underline{R}elaxation exact algorithm (IR) (see Appendix \ref{iterativeRelaxation}) and the \underline{S}imulated \underline{A}nnealing algorithm (SA) proposed by \cite{xu2013robust} and \cite{feng2016robust} for the similar problems. The time limits set for all algorithms are 2 hours. 

The notations used in the table to present the results are as follows:

\begin{itemize}
    \item \emph{mid-Gap}: gap between the maximum regret of the solution by the corresponding algorithm and that of the initial solution under mid-scenario;
    \item \emph{IR-Gap}: gap between the maximum regret of the solution by the corresponding algorithm and that of the solution by the IR algorithm;
    \item \emph{time}: average running time of corresponding algorithm;
    \item \emph{\#Opt}: number of instances solved to optimality.
\end{itemize}

where \emph{mid-Gap} is calculated as follows:

$$\text{\emph{mid-Gap}}=\frac{\text{obj(Alg)}-\text{obj(Mid)}}{\text{obj(Mid)}}\times 100\%$$
and IR-Gap is calculated as follows:

$$\text{\emph{IR-Gap}}=\frac{\text{obj(Alg)}-\text{obj(IR)}}{\text{obj(IR)}}\times 100\%$$

In these formula, ``obj'' is the objective function value, i.e., the maximum regret in this case, of the corresponding algorithm and solution. Specifically, obj(Mid) is the maximum regret of the optimal solution under mid-scenario, which yields an upper bound for RUPMSS with Theorem \ref{Theorem:midpoint}.

\begin{table}[htbp] 
\centering
\caption{Numerical Results of Different Algorithms }\label{Table:ResultsComparison}
{
\begin{tabular}{ccrrrrrrrrrrr}
\hline
&&\multicolumn{3}{c}{\emph{IR}}&&\multicolumn{3}{c}{\emph{SA}}&&\multicolumn{3}{c}{\emph{MDH}}\\
\cline{3-5} \cline{7-9} \cline{11-13}
\emph{n}&	\emph{m} &	\emph{$Gap_{mid}$(\%)} & \emph{\#Opt} & \emph{time(s)} &&\emph{$Gap_{IR}$(\%)}&  \emph{\#Opt} & \emph{time(s)} && \emph{$Gap_{IR}$(\%)} &  \emph{\#Opt} & \emph{time(s)} \\
\hline
	&	3	&	-9.56 	&	20	&	5.84 	&&	3.42 	&	14	&	15.99 	&&	0.78 	&	18	&	12.45 	\\
9	&	5	&	-10.44 	&	20	&	14.85 	&&	7.35 	&	10	&	26.30 	&&	1.45 	&	18	&	16.83 	\\
	&	7	&	-8.00 	&	20	&	33.94 	&&	7.87 	&	11	&	29.48 	&&	3.58 	&	14	&	32.64 	\\
\\
	&	3	&	-6.07 	&	20	&	47.79 	&&	3.61 	&	12	&	47.70 	&&	0.70 	&	18	&	46.39 	\\
12	&	5	&	-9.47 	&	20	&	250.48 	&&	9.52 	&	9	&	112.57 	&&	0.96 	&	17	&	48.14 	\\
	&	7	&	-8.35 	&	20	&	280.80 	&&	8.14 	&	10	&	96.19 	&&	3.54 	&	14	&	92.55 	\\
\\
	&	3	&	-6.88 	&	20	&	227.93 	&&	4.91 	&	9	&	117.50 	&&	0.62 	&	17	&	118.63 	\\
15	&	5	&	-14.48 	&	19	&	1479.40 	&&	18.68 	&	7	&	719.30 	&&	1.00 	&	17	&	122.53 	\\
	&	7	&	-7.68 	&	11	&	4603.14 	&&	6.95 	&	9	&	802.43 	&&	0.91 	&	8	&	189.83 	\\
\\
	&	3	&	-7.32 	&	14	&	3379.44 	&&	5.23 	&	10	&	457.66 	&&	1.84 	&	10	&	550.17 	\\
20	&	5	&	-5.58 	&	0	&	7200.15 	&&	1.46 	&	——	&	4130.64 	&&	-6.25 	&	——	&	552.49 	\\
	&	7	&	-1.42 	&	0	&	7200.14 	&&	0.34 	&	——	&	6747.23 	&&	-6.35 	&	——	&	447.93 	\\
\\
	&	3	&	-3.98 	&	0	&	7200.00 	&&	2.00 	&	——	&	1877.93 	&&	-1.90 	&	——	&	1894.12 	\\
25	&	5	&	0.00 	&	0	&	7200.00 	&&	-0.57 	&	——	&	6995.62 	&&	-5.26 	&	——	&	1461.08 	\\
	&	7	&	0.00 	&	0	&	7200.00 	&&	0.00 	&	——	&	7200.00 	&&	-0.94 	&	——	&	856.57 	\\
\\
	&	3	&	-3.94 	&	0	&	7200.00 	&&	0.70 	&	——	&	5647.67 	&&	-3.92 	&	——	&	4377.68 	\\
30	&	5	&	0.00 	&	0	&	7200.00 	&&	-0.21 	&	——	&	7200.00 	&&	-6.80 	&	——	&	3741.72 	\\
	&	7	&	0.00 	&	0	&	7200.00 	&&	0.00 	&	——	&	7200.00 	&&	-11.03 	&	——	&	3938.84 	\\
\hline
\end{tabular}}
\end{table}

Table \ref{Table:ResultsComparison} presents the numerical results of the corresponding algorithm for the instances we generate.  
From the subcolumn ``\emph{\#Opt}'' under IR column, we could see that for those instances with the scale smaller than or equal to ``15-3'' and ``12-7'', all the instances could be solved to optimality by IR within 2 hours. For these instances, the average gap between IR algorithm and initial solution under the subcolumn ``\emph{mid-Gap}'' are all less than 11\%, which reflects that the quality of the mid-scenario initial solution we generate is fairly good. 
When the data scale surpasses $m=15,n=5$, some instances may not be solved to optimality by IR in 2 hours. When $n$ reaches 25, for combinations ``$25-3$'' and ``$30-3$'', the values under subcolumn ``\emph{mid-Gap}'' are still negative, which means that for these problem scales, the IR algorithm could still find a solution better than the initial solution. But when the number of machines grows to 5, the values under subcolumn ``\emph{mid-Gap}'' are 0, that is to say, the solution returned by the IR algorithm has no improvement over the mid-scenario initial solution. 
As for the efficiency of the IR algorithm, as the scale of the problem becomes larger, the running time of the algorithm increases rapidly. When the data scale grows to $n=20,m=5$, IR reaches the 2-hour time limit.

As for the MDH algorithm, from the subcolumn ``\emph{IR-Gap}'' we could see that for those instances combinations that IR algorithm could yield optimal solution, the gaps between MDH and IR are small. Besides, the number of instances that could be solved to optimality by MDH is close to that of IR. From the subcolumn ``\emph{time}'' for MDH algorithm, we could see that except for the combinations with 9 jobs($n=9$), the running time of MDH algorithm is less than that of IR algorithm.   
For the instance combinations that IR algorithm unable to find the optimal solutions, the average gaps between MDH algorithm and IR algorithm are negative, which indicates that the MDH could find better solutions than IR algorithm in a shorter time. Due to that IR algorithm cannot find the optimal solution for these combinations, the dash symbol ``-'' are presented in column ``\emph{\#Opt}''. 

The results of SA algorithm are also presented in Table \ref{Table:ResultsComparison}. Compared to SA algorithm, for all combinations, the MDH algorithm obtain lower gaps and larger number of optimal solutions. As for the efficiency of the algorithms, the MDH algorithm consumes less time than the SA algorithm for nearly all combinations. So the MDH algorithm we propose performances better than the SA algorithm in terms of both the efficiency and effectiveness. 

In Fig.\ref{fig:Comparison}, we use scatter plots to showcase and compare the performances of IR, SA, and MDH intuitively. Each point in the plot represents one single instance, and its coordinate is the running time(maximum regret) of the corresponding algorithms. Obviously, the points below the $y = x$ line showcase superior performance of the algorithm corresponding to the $y$-axis, and vice versa. Note that the logarithm scales on all the plots. To denote instances of different scales, first we need to define the "data scale" of one instance as $m\cdot n^2$, which is proportional to the number of the $x_{ijk}$ variables in the optimization model. As the legend shows, the points representing small-scale instances (with small $m\cdot n^2$ values) have a brighter color. Larger-scale instances are colored darker. The dotted line is $y=x$. The points on this line means the algorithms corresponding to $x$-axis and $y$-axis yield similar performances for this instance.

Fig.\ref{fig1} and Fig.\ref{fig3} are the pairwise regret comparison for MDH vs.IR and MDH vs.SA. The two graphs have similar distributions. We could see that for most instances, MDH outperforms both IR and SA, and more importantly, the larger the instance scale is, the closer the points tend to lean towards $x$-axis, which means MDH prevails more massively. 

Fig.\ref{fig2} and Fig.\ref{fig4} are the pairwise time comparison. For Fig.\ref{fig2}, we could see that IR perform better than MDH for the small-scale instances whose data scales ($m\cdot n^2$) are less than 1000. As the data scale increases, the points flow across the dotted line $y=x$, which suggests that MDH prevails gradually. The vertical line at the right side indicates the instances where IR reaches the 2-hour time limit. Even for these instances, MDH can perform relatively well within the time limit. For Fig.\ref{fig4}, we could see that MDH and SA draw for small-scale instances. But when the data scale becomes larger, the mass of points are located below the dotted line and on the right side of the graph. Similar to Fig.\ref{fig2}, there are also dense points on a vertical line where SA reaches the time limit. For these instances, MDH yields a suporior performance compared to SA.

\section{Conclusion}\label{section8}
We discuss the min-max regret robust optimization on unrelated parallel machine scheduling with sequence-dependent setup times, which originates from a high-end equipment factory in the east of China. We give the robust optimization model and propose critical properties regarding to worst-case scenario's characteristics. The ERE method is proposed to accelerate the process of finding the maximum regret of a solution. The MDH algorithm is designed based on the findings on the decomposition property and the characteristics of the shift \& interchange search methods. Numerical experiments shows (1) the ERE method we propose significantly improves the efficiency of finding the worst-case scenario and the corresponding maximum regret; (2) our algorithm yields a better performance over other exact and heuristic algorithms designed for this type of problems. In the future work, we can further explore more uncertainty conditions, e.g., the uncertainty of the setup times; the distributional robust version of this problem may also be considered, where we need to consider the worst-case distribution for the expected value of the objective; the other robust criterion, like the relative regret, is also worth examining to further avoid the conservativeness of the robust approach.

\bibliographystyle{unsrt}
\bibliography{UPMSS}

\begin{appendices}

\section{The Iterative Relaxation Exact Algorithm (IR)}\label{iterativeRelaxation}

A widely used approach for min-max robust optimization problems is the \underline{I}terative \underline{R}elaxation algorithm (IR) (see Xu et al.\cite{xu2013robust} and Feng et al.\cite{feng2016robust}). The approach starts with a subset $S^0$ and adds one additional scenario from $S$ to this subset iteratively. In every iteration, an upper bound and a lower bound are obtained, seperately, and the gap becomes smaller after every iteration. The algorithm is terminated when the gap reaches 0 and an exact optimal solution is found.

We formulate the following $h$-th relaxed model, and define the current subset of scenarios in the $h$-th iteration as $S^h$.
\begin{alignat}{1}
&  \underset{\pi}{\min}~r  \\
&C_{k}^s-C_{j}^s+V\left(1-x_{i j k}\right) \geq s_{i j k}+p_{i k}^s,\quad j\in N_{0}, j \neq k, k \in N, i \in M, s\in S^h \\
\text{s.t.} &\sum_{j \in N_{0}, k \in N, j \neq k} s_{ijk}x_{i j k} + \sum_{j \in N}p_{i j}^s y_{ij} \leq C_{\max}^s,\quad i \in M, s\in S^h \\
&C_{\max}^s \geq C_{j}^s, \quad j\in N, s\in S^h\\
&C^s_0 = 0,\quad s\in S^h\\
&C_{\max}^s - F_s^* \leq r,\quad s\in S^h\\
& x_{ijk}\in\{0,1\}, y_{ij}\geq 0, C_j^s\geq 0, C^s_{\max}\geq 0,\quad j, k \in N, i \in M, s\in S^h\\
& \text{Constraints (\ref{C2})-(\ref{C5})} \notag
\end{alignat}

Starting with $S^0 = \emptyset$, in the $h$-th iteration, we solve the $h$-th relaxed model above to obtain solution $\pi^h$. The maximum regret of $\pi$, $R_{\max}(\pi^h)$ provides an upper bound. The optimal objective value $r^h$ gives a lower bound. The worst-case scenario $s^{h+1}$ for $\pi^h$ is generated and $S^{h+1} = S^{h} \cup \{s^{h+1}\}$. The gap $R_{\max}(\pi^h) - r^h$ becomes smaller during the iteration process. When the gap becomes 0, the optimal solution is found.

As the iterations proceed, not only the number of the constraints becomes larger, but also the scale of the decision variables in the relaxed model is growing, which adds to the difficulty of iteration. So the time for this exact algorithm to find the optimal solution grows rapidly as the scale of the problem becomes larger, and it completely fails to solve large-scale cases.

\end{appendices}

\end{document}